\newcommand{\nc}{\newcommand}
\nc{\fg}{\mathfrak{f} } \nc{\vg}{\mathfrak{v} } \nc{\wg}{\mathfrak{w} }
\nc{\zg}{\mathfrak{z} } \nc{\ngo}{\mathfrak{n} } \nc{\kg}{\mathfrak{k} }
\nc{\mg}{\mathfrak{m} } \nc{\bg}{\mathfrak{b} } \nc{\ggo}{\mathfrak{g} }
\nc{\ggob}{\overline{\mathfrak{g}} } \nc{\sog}{\mathfrak{so} }
\nc{\sug}{\mathfrak{su} } \nc{\spg}{\mathfrak{sp} } \nc{\slg}{\mathfrak{sl} }
\nc{\glg}{\mathfrak{gl} } \nc{\cg}{\mathfrak{c} } \nc{\rg}{\mathfrak{r} }
\nc{\hg}{\mathfrak{h} } \nc{\tg}{\mathfrak{t} } \nc{\ug}{\mathfrak{u} }
\nc{\dg}{\mathfrak{d} } \nc{\ag}{\mathfrak{a} } \nc{\pg}{\mathfrak{p} }
\nc{\sg}{\mathfrak{s} } \nc{\affg}{\mathfrak{aff} } \nc{\qg}{\mathfrak{q} }
\nc{\pca}{\mathcal{P}} \nc{\nca}{\mathcal{N}} \nc{\lca}{\mathcal{L}}
\nc{\oca}{\mathcal{O}} \nc{\mca}{\mathcal{M}} \nc{\tca}{\mathcal{T}}
\nc{\aca}{\mathcal{A}} \nc{\cca}{\mathcal{C}} \nc{\gca}{\mathcal{G}}
\nc{\sca}{\mathcal{S}} \nc{\hca}{\mathcal{H}} \nc{\bca}{\mathcal{B}}
\nc{\dca}{\mathcal{D}} \nc{\val}{\operatorname{val}}
\nc{\vp}{\varphi} \nc{\ddt}{\tfrac{d}{dt}} \nc{\dds}{\frac{d}{ds}}
\nc{\dpar}{\tfrac{\partial}{\partial t}} \nc{\im}{\mathrm{i}}
\nc{\SO}{\mathrm{SO}} \nc{\Spe}{\mathrm{Sp}} \nc{\Sl}{\mathrm{SL}}
\nc{\SU}{\mathrm{SU}} \nc{\Or}{\mathrm{O}} \nc{\U}{\mathrm{U}} \nc{\Gl}{\mathrm{GL}}
\nc{\Se}{\mathrm{S}} \nc{\Cl}{\mathrm{Cl}} \nc{\Spein}{\mathrm{Spin}}
\nc{\Pin}{\mathrm{Pin}} \nc{\G}{\mathrm{GL}_n} \nc{\g}{\mathfrak{gl}_n}
\nc{\RR}{{\Bbb R}} \nc{\HH}{{\Bbb H}} \nc{\CC}{{\Bbb C}} \nc{\ZZ}{{\Bbb Z}}
\nc{\FF}{{\Bbb F}} \nc{\NN}{{\Bbb N}} \nc{\QQ}{{\Bbb Q}} \nc{\PP}{{\Bbb P}} \nc{\OO}{{\Bbb O}}
\nc{\vs}{\vspace{.2cm}} \nc{\vsp}{\vspace{1cm}} \nc{\ip}{\langle\cdot,\cdot\rangle}
\nc{\ipp}{(\cdot,\cdot)} \nc{\la}{\langle} \nc{\ra}{\rangle} \nc{\unm}{\tfrac{1}{2}} \nc{\unt}{\tfrac{1}{3}}
\nc{\unc}{\tfrac{1}{4}} \nc{\uns}{\tfrac{1}{6}} \nc{\no}{ \noindent}
\nc{\lam}{\Lambda^2(\RR^n)^*\otimes\RR^n} \nc{\tangz}{{\rm T}^{\rm Zar}}
\nc{\nor}{{\sf n}}  \nc{\mum}{/\!\!/} \nc{\kir}{/\!\!/\!\!/}
\nc{\Ri}{\tfrac{4\Ric_{\mu}}{||\mu||^2}} \nc{\ds}{\displaystyle}
\nc{\ben}{\begin{enumerate}} \nc{\een}{\end{enumerate}} \nc{\f}{\frac}
\nc{\lb}{[\cdot,\cdot]} \nc{\isn}{\tfrac{1}{||v||^2}}
\nc{\gkp}{(\ggo=\kg\oplus\pg,\ip)} \nc{\ukh}{(\ug=\kg\oplus\hg,\ip)}
\nc{\tgkp}{(\tilde{\ggo}=\kg\oplus\pg,\ip)}
\nc{\wt}{\widetilde} \nc{\Mm}{M}
\nc{\iop}{\mathtt{i}} \nc{\jop}{\mathtt{j}}
\nc{\Hess}{\operatorname{Hess}} \nc{\ad}{\operatorname{ad}}
\nc{\Ad}{\operatorname{Ad}} \nc{\rank}{\operatorname{rank}}
\nc{\Irr}{\operatorname{Irr}} \nc{\End}{\operatorname{End}}
\nc{\Aut}{\operatorname{Aut}} \nc{\Inn}{\operatorname{Inn}}
\nc{\Der}{\operatorname{Der}} \nc{\Ker}{\operatorname{Ker}}
\nc{\Iso}{\operatorname{Iso}} \nc{\Diff}{\operatorname{Diff}}
\nc{\Lie}{\operatorname{L}} \nc{\tr}{\operatorname{tr}} \nc{\dif}{\operatorname{d}}
\nc{\sen}{\operatorname{sen}} \nc{\modu}{\operatorname{mod}}
\nc{\CRic}{\operatorname{PP}} \nc{\Cric}{\operatorname{P}} \nc{\Ricci}{\operatorname{Ric}}
\nc{\sym}{\operatorname{sym}} \nc{\herm}{\operatorname{herm}} \nc{\symac}{\operatorname{sym^{ac}}}
\nc{\symc}{\operatorname{sym^{c}}} \nc{\scalar}{\operatorname{scal}}
\nc{\grad}{\operatorname{grad}} \nc{\ricci}{\operatorname{Rc}}
\nc{\Nor}{\operatorname{Norm}}  \nc{\ricc}{\operatorname{Rc^{c}}}
\nc{\Ricc}{\operatorname{Ric^{c}}} \nc{\ricac}{\operatorname{Rc^{ac}}}
\nc{\Ricac}{\operatorname{Ric^{ac}}} \nc{\Riem}{\operatorname{Rm}} \nc{\Sec}{\operatorname{Sec}}
\nc{\riccig}{\operatorname{ric^{\gamma}}} \nc{\Rin}{\operatorname{M}}
\nc{\Le}{\operatorname{L}} \nc{\tang}{\operatorname{T}}
\nc{\level}{\operatorname{level}} \nc{\rad}{\operatorname{r}}
\nc{\abel}{\operatorname{ab}} \nc{\CH}{\operatorname{CH}} \nc{\Cone}{{\mathcal C}} \nc{\CCone}{\operatorname{CC}} \nc{\CP}{{\mathcal P}}
\nc{\mcc}{\operatorname{mcc}} \nc{\Adj}{\operatorname{Adj}}
\nc{\Order}{\operatorname{O}}  \nc{\inj}{\operatorname{inj}} \nc{\proy}{\operatorname{pr}}
\nc{\vol}{\operatorname{vol}} \nc{\Diag}{\operatorname{Diag}} \nc{\Diagg}{\operatorname{Diag}}
\nc{\Spec}{\operatorname{Spec}} \nc{\Ima}{\operatorname{Im}} \nc{\Rea}{\operatorname{Re}}
\nc{\spann}{\operatorname{span}} \nc{\Aff}{\operatorname{Aff}} \nc{\mm}{\operatorname{m}} 
\nc{\Crit}{\operatorname{Crit}} \nc{\En}{\operatorname{E}}
\theoremstyle{plain}
\newtheorem{theorem}{Theorem}[section]
\newtheorem{proposition}[theorem]{Proposition}
\newtheorem{corollary}[theorem]{Corollary}
\theoremstyle{definition}
\newtheorem{definition}[theorem]{Definition}
\theoremstyle{remark}
\newtheorem{remark}[theorem]{Remark}
\newtheorem{example}[theorem]{Example}
\title{The search for solitons on homogeneous spaces}
\author{Jorge Lauret} 
\address{Universidad Nacional de C\'ordoba and CIEM, CONICET (Argentina)}
\email{lauret@famaf.unc.edu.ar}
\thanks{The author gratefully acknowledges support from  FONCyT (ANPCyT) and SECyT (UNC)}
\begin{document}

\maketitle

\begin{abstract}
The concept of soliton, in its most general version, allows us to find canonical or distinguished elements on any set provided with an equivalence relation and an `optimal' tangent direction at each point.  We study in this paper solitons on homogeneous spaces, which have consolidated its role as a quite useful tool to find soliton geometric structures in Riemannian, pseudo-Riemannian, complex, symplectic and $G_2$ geometries.  
\end{abstract}

\tableofcontents

\section{Introduction}\label{intro2}

We aim to continue here the study of geometric flows and their solitons on homogeneous spaces, with emphasis in a unified approach regardless of the type of geometry as done in \cite{minimal, SCF, BF, solitons}.  The present paper is intended to be an invitation to explore the class of homogeneous spaces beyond Lie groups, with regard to the existence, uniqueness and structure of solitons in pseudo-Riemannian, Hermitian, almost-K\"ahler and exceptional holonomy geometries.    

A {\it homogeneous space} is a quotient $M=G/K$ of a Lie group $G$ by a closed subgroup $K\subset G$.  They are differentiable manifolds and their algebraic provenance promises a rich interplay.  The presentation of a differentiable manifold as a homogeneous space is far from being unique, and attached to each $M=G/K$ we have the subgroup $\Aut(G/K)\subset\Diff(M)$ of {\it equivariant diffeomorphisms} defined by automorphisms of $G$ taking $K$ onto $K$.  On the other hand, the natural left action of $G$ on $M$ determines $G$-actions by pull-back on any space of tensor fields on $M$, giving rise to the concept of $G$-{\it invariant} geometric structure on $M$.  Two $G$-invariant geometric structures (or tensor fields) are said to be {\it equivariantly equivalent} when they belong to the same $\Aut(G/K)$-orbit.   

Given a homogeneous space $M=G/K$ and some kind of geometry, we ask ourselves the following question borrowed from the first page of Besse's book \cite{Bss}:

\begin{itemize}
\item[ ] Are there any best (or nicest, or distinguished) $G$-invariant structures on $M$? 
\end{itemize}
The meaning of these adjectives are, of course, part of the problem.  Classical possibilities to approach this question include special curvature properties, critical points of geometric functionals, parallelism relative to some connection, etc.  The property is expected to be weak enough to allow existence results for large classes of homogeneous spaces, but also sufficiently strong to produce uniqueness or finiteness results up to equivariant equivalence.
 
In this paper, the following dynamical approach is considered.  Let $\Gamma$ be a space of $G$-invariant geometric structures on a fixed homogeneous space $M=G/K$.  Assume that at each $\gamma\in\Gamma$, there is a $G$-invariant optimal or preferred direction $q(\gamma)\in T_\gamma\Gamma$, viewed as a `direction of improvement' in some sense.  The corresponding geometric flow on $\Gamma$, defined by the ODE 
\begin{equation}\label{flow-LG-intro}
\ddt\gamma(t)=q(\gamma(t)), \qquad \gamma(0)=\gamma,
\end{equation}
is therefore supposed to `improve' the structures while they are flowing.  In this light, it is natural to consider a structure $\gamma$ distinguished in the case when the solution to \eqref{flow-LG-intro} is given by 
$$
\gamma(t)=c(t)f(t)^*\gamma, \qquad\mbox{for some}\quad c(t)\in\RR^*,  \quad f(t)\in\Aut(G/K), 
$$
i.e.\ $\gamma(t)$ is self-similar relative to equivariant equivalence.  In that case, $(G/K,\gamma)$ is called a {\it semi-algebraic soliton} in the literature.  The condition is equivalent to the following nice blend of geometric and algebraic aspects of the homogeneous structure $(G/K,\gamma)$:   
\begin{equation}\label{sas-intro}
q(\gamma)=c\gamma+\theta(D_\pg)\gamma, \qquad \mbox{for some} \quad c\in\RR, \quad 
D=\left[\begin{matrix} \ast&\ast\\ 0&D_\pg\end{matrix}\right]\in\Der(\ggo),
\end{equation}
where $\theta$ is the usual $\glg(\pg)$-representation on tensors.  Here we are using a {\it reductive decomposition} $\ggo=\kg\oplus\pg$ (i.e.\ $\Ad(K)\pg\subset\pg$) for the homogeneous space $G/K$ and the fact that any $G$-invariant tensor field on $M=G/K$ can be identified with an $\Ad(K)$-invariant tensor on $T_oM\equiv\pg$.   

If the derivation $D$ in \eqref{sas-intro} satisfies in addition that $D_\pg$ is orthogonal to the stabilizer $\glg(\pg)_\gamma$, then $(G/K,\gamma)$ is said to be an {\it algebraic soliton}.  They also distinguish themselves by being precisely the fixed points of the bracket flow as well as by their `diagonal' evolution (see below).  The {\it bracket flow} is a dynamical system defined on the variety of Lie algebras which is equivalent to the geometric flow \eqref{flow-LG-intro} in a very precise sense.  

The concepts of algebraic and semi-algebraic solitons have a long and fruitful history in the Ricci flow case.  More recently, they have also provided a quite useful tool to study the existence problem of homogeneous soliton geometric structures in many different geometries, including pseudo-Riemannian, complex, symplectic and $G_2$ (see Section \ref{alg-sol-sec} for references).  

After surveying the topics of general solitons, homogeneous geometric structures and semi-algebraic solitons in Sections \ref{intro}, \ref{hom-sec} and \ref{alg-sol-sec}, respectively, the following new results are obtained.  

\begin{enumerate}[{\small $\bullet$}]
\item Provided that a certain mild condition on the flow holds, we prove that any soliton $(M,\gamma)$ in the general sense (i.e.\ $q(\gamma)=c\gamma+\lca_X\gamma$ for some $c\in\RR$ and $X\in\mathfrak{X}(M)$), which is homogeneous, is a semi-algebraic soliton when presented as a homogeneous space $(G/K,\gamma)$ with $G=\Aut(M,\gamma)$ (see Section \ref{evol-sec}).  

\item In Section \ref{mba-sec}, we first give overviews on the moving-bracket approach (to study homogeneous geometric structures) and on the bracket flow.  Secondly, we compute the evolution of semi-algebraic solitons relative to the bracket flow.

\item We geometrically characterize algebraic solitons among homogeneous solitons as those developing a `simultaneously diagonalizable' solution $\gamma(t)$ to the ODE \eqref{flow-LG-intro} (see Section \ref{fd-sec}).  
\end{enumerate}

\vspace{.5cm} \noindent {\it Acknowledgements.}   The author gratefully acknowledges Valeria Guti\'errez and Marina Nicolini for helpful comments during the preparation of this paper.

\section{Solitons in differential geometry}\label{intro}

From a very general point of view, the necessary ingredients to define a {\it soliton} are just the following (see \cite{solitons}): 

\begin{enumerate}[{\small $\bullet$}]
\item A set $\Gamma$ with a notion of tangent space $T_\gamma\Gamma$ at each $\gamma\in\Gamma$.  

\item An equivalence relation $\simeq$ on $\Gamma$.  

\item An optimal or preferred direction at each point,  $q(\gamma)\in T_\gamma\Gamma$, viewed as a `direction of improvement' in some sense.  
\end{enumerate}
In that case, $\gamma\in\Gamma$ is called a {\it soliton} if 
\begin{equation}\label{sol-def}
q(\gamma)\in T_\gamma[\gamma],    
\end{equation}
where $[\gamma]$ is the equivalence class of $\gamma$, that is, $\gamma$ is in a way `nice' enough that it does not need to be `improved'.  

In the context of differential geometry, we consider a space $\Gamma$ of geometric structures on a fixed differentiable manifold $M$ and identify $\Gamma$ with a subset of the vector space $\tca^{r,s}M$ of all tensor fields of some type $(r,s)$ (or tuples of tensors).  As usual, the equivalence relation is scaling and pulling-back by diffeomorphisms, i.e.\ the equivalence class of $\gamma\in\Gamma$ is given by 
$$
[\gamma]=\left\{ ch^*\gamma:c\in\RR,\; h\in\Diff(M)\right\}\cap\Gamma. 
$$   
Typically, a preferred direction, 
$$
\gamma\mapsto q(\gamma)\in T_\gamma\Gamma\subset\tca^{r,s}M,
$$ 
is given by a curvature tensor associated to some affine connection attached to $\gamma$, or the gradient of a natural geometric functional, or the Hodge-Laplacian on differential forms, etc.  

Once the space $\Gamma$ and the preferred direction $q$ have been specified, it follows from \eqref{sol-def} that $\gamma\in\Gamma$ is a soliton if and only if  
\begin{equation}\label{sol-def-DG}
q(\gamma)=c\gamma+\lca_X\gamma, \qquad  \mbox{for some} \quad c\in\RR^*, \quad X\in\mathfrak{X}(M), 
\end{equation}
where $\lca_X$ denotes Lie derivative with respect to a vector field $X$ of $M$ (recall that $T_\gamma(\Diff(M)\cdot\gamma)=\lca_{\mathfrak{X}(M)}\gamma$).  In the case when $q$ is {\it diffeomorphism equivariant} (i.e.\ $q(f^*\gamma)=f^*q(\gamma)$ for any $f\in\Diff(M)$ and $\gamma\in\Gamma$), which will be assumed in this paper, any geometric structure in $\Gamma$ which is equivalent to a soliton is also a soliton.   

\begin{example}\label{RicS} ({\it Ricci solitons})   
The space $\Gamma$ of all Riemannian metrics on $M$ is open in $\sca^2M$, the vector space of all symmetric $2$-tensors, and a natural preferred direction is 
$$
q(g):=-2\Ricci_g\in T_g\Gamma=\sca^2M\subset\tca^{2,0}M,
$$ 
where $\Ricci_g$ is the Ricci tensor of the metric $g\in\Gamma$.  This gives rise to the well-known Ricci solitons.   
\end{example} 

\begin{example}
If a complex manifold $(M,J)$ is fixed and a space $\Gamma$ of hermitian metrics on $(M,J)$ are to be considered, then the equivalence is determined by the action of the group $\Aut(M,J)$ of bi-holomorphic maps rather than $\Diff(M)$, so $X$ has to be a holomorphic field.  In the symplectic case, the group of symplectomorphisms $\Aut(M,\omega)$ of a fixed symplectic manifold $(M,\omega)$ must be considered.  
\end{example}

Concerning the associated geometric flow,
\begin{equation}\label{flow-DG}
\dpar\gamma(t)=q(\gamma(t)), \qquad \gamma(0)=\gamma,  
\end{equation}
one easily obtains that $\gamma$ is a soliton if and only if 
\begin{equation}\label{sss}
\gamma(t)=c(t)f(t)^*\gamma, \qquad  \mbox{for some} \quad c(t)\in\RR, \quad f(t)\in\Diff(M),
\end{equation}
i.e.\ $\gamma(t)$ is a {\it self-similar} solution.  

\begin{example} ({\it Ricci flow})
The evolution equation determined by Example \ref{RicS} is precisely the famous Ricci flow $\dpar g(t)=-2\Ricci_{g(t)}$ introduced in the 80s by Hamilton and used as a primary tool by Perelman to prove the Poincar\'e and Geometrization conjectures.    
\end{example}

Note that solitons are not necessarily fixed points of the flow (i.e.\ zeroes of $q$).  Nevertheless, they are not either improved by the flow and may attract or stop other solutions in its way to a fixed point.  Thus their existence is undesirable if one is hoping to use the flow to find zeroes of $q$ in $\Gamma$; on the other hand, the existence of solitons is great news for the search of canonical or distinguished structures in $\Gamma$ beyond zeroes of $q$.        

\begin{remark}
A natural preferred direction $q$ may or may not produce a flow, as the existence of solutions to the PDE \eqref{flow-DG} is not guaranteed.  So possibly, a study of solitons can be worked out without any reference to a flow, as in the case of Ricci solitons in pseudo-Riemannian geometry (see \cite{ClvGrc}).  
\end{remark}

Let us assume for the rest of the paper that the scaling behavior of the preferred direction $q$ is given by \begin{equation}\label{alfa}
q(c\gamma)=c^\alpha\gamma, \qquad \forall c\in\RR^*, \quad \gamma\in\Gamma, 
\end{equation}
for some fixed $\alpha<1$.  In that case, the scaling in \eqref{sss} is given by 
$$
c(t)=((1-\alpha)ct+1)^{\frac{1}{1-\alpha}},
$$ 
where $c$ is the constant appearing in the soliton equation \eqref{sol-def-DG} (see \cite{BF}).  The soliton $\gamma$ is therefore called  {\it expanding},  {\it steady}  or  {\it shrinking} depending on whether $c>0$, $c=0$ or $c<0$, and   the maximal time interval of the corresponding self-similar solution is respectively given by,    
$$
\left(-T_\alpha,\infty\right),  \quad (-\infty,\infty),  \quad \left(-\infty,T_\alpha\right),  \qquad\mbox{where}\quad T_\alpha:=\frac{1}{(1-\alpha)|c|}>0,   
$$
often called {\it immortal}, {\it eternal} and {\it ancient} solutions, respectively.  For instance, $\alpha=0$ if $q$ is the Ricci tensor or form of any connection associated to a metric or to an almost-hermitian structure, and $\alpha=\unt$ for most of the flows for $G_2$-structures in the literature.  

We refer to \cite{BF, solitons} and Section \ref{alg-sol-sec} for overviews on solitons for different kinds of geometric flows in complex, symplectic and $G_2$ geometries, including: 

\begin{enumerate}[{\small $\bullet$}]  
\item Ricci flow \cite{libro}.  

\item Chern-Ricci flow \cite{TstWnk}.   

\item Pluriclosed flow \cite{Str}.   

\item Hermitian curvature flow \cite{StrTn}.  

\item Anti-complexified Ricci flow \cite{LeWng}.  

\item Symplectic curvature flow \cite{StrTn2}.    

\item $G_2$-Laplacian flow \cite{Lty, Lin, KrgMckTsu}.   
\end{enumerate}

\begin{remark}
The word {\it soliton} has been used in PDE theory since 19th century in the context of Korteweg-de Vries equation to name certain solutions resembling solitary water waves.  More generally, in the study of geometric flows, solitons refer to geometric structures which evolve along symmetries of the flow (i.e.\ self-similar solutions) and the use of the word soliton was initiated by Hamilton in the 80s (see \cite{Hml}) in the context of Ricci flow to name {\it Ricci solitons}.   Nowadays, solitons are spread over the fields of differential geometry and geometric analysis.  
\end{remark}

\section{Homogeneous geometric structures}\label{hom-sec}

A differentiable manifold endowed with a geometric structure, say $(M,\gamma)$, is said to be {\it homogeneous} if its automorphism group,
$$
\Aut(M,\gamma):=\{ f\in\Diff(M):f^*\gamma=\gamma\},
$$
acts transitively on $M$.  Note that this property makes the points of $M$ geometrically indistinguishable.     

Recall that a {\it homogeneous space} is a quotient $G/K$ of a Lie group $G$ over a closed subgroup $K\subset G$.  If $(M,\gamma)$ is homogeneous, then each Lie group $G\subset\Aut(M,\gamma)$ which is still transitive on $M$ gives rise to a presentation of $M$ as a homogeneous space $G/K$, where $K$ is the isotropy subgroup of $G$ at some {\it origin} point $o\in M$.  In this way, $\gamma$ becomes a $G$-invariant geometric structure on the homogeneous space $M=G/K$.  On the other hand, one can also start with a homogeneous space $M=G/K$ endowed with a $G$-{\it invariant} geometric structure $\gamma$ (i.e.\ $\tau_a^*\gamma=\gamma$ for any $a\in G$, where $\tau_a\in\Diff(M)$ is defined by $\tau_a(bK)=abK$ for all $b\in G$), giving rise to a homogeneous $(M,\gamma)$.  

Any homogeneous space $G/K$ will be assumed in this paper to be only {\it almost-effective}  (i.e.\ the subgroup $\{a\in K:\tau_a=id\}$ is discrete) rather than {\it effective} (i.e.\ $\tau_a=id$ if and only if $a=e$).  Note that $G/K$ is effective if $G\subset\Aut(M,\gamma)$.   

If $\ggo=\kg\oplus\pg$ is a {\it reductive decomposition} (i.e.\ $\Ad(K)\pg\subset\pg$) for the homogeneous space $G/K$, where $\ggo$ and $\kg$ respectively denote the Lie algebras of $G$ and $K$, then the tangent space at the origin of $M$ is identified with $\pg$, 
$$
T_oM\equiv\pg, 
$$ 
via the isomorphism 
\begin{equation}\label{ideT}
\pg\longrightarrow T_oM, \qquad X\mapsto d\pi|_eX=X_o,
\end{equation}
since $\Ker d\pi|_e=\kg$.  Here $\pi:G\rightarrow G/K$ is the usual projection and each $X\in \ggo$ is also viewed as the vector field on $M$ defined by $X_p:=\left. \ddt\right |_{t=0} \tau_{\exp{tX}}(p)$.   

\begin{remark}\label{metg}
In the case when a Riemannian metric $g_\gamma$ is involved in the geometric structure $\gamma$, i.e.\ contained in or uniquely determined by $\gamma$, one obtains the following:  

\begin{enumerate}[{\small $\bullet$}]
\item $\Aut(M,\gamma)$ is a Lie group, as it is a closed subgroup of the Lie group $\Iso(M,g_\gamma)$ of all isometries of the Riemannian manifold $(M,g_\gamma)$.  In particular, $G$ is closed in $\Aut(M,\gamma)$ if and only if $K$ is compact just as in the Riemannian case.    

\item The existence of a reductive decomposition for $G/K$ is guaranteed since $\overline{\Ad(K)}\subset\Gl(\ggo)$ turns out to be compact.  Anyway, one can just work with the identification $T_oM\equiv\ggo/\kg$ if a reductive decomposition is preferred not to be chosen.
\end{enumerate}
\end{remark}

\subsection{Tensors}\label{ten-sec}
Since a $G$-invariant tensor field on a homogeneous space $M=G/K$ is determined by its value at the origin $o$, it can always be identified with a tensor $\gamma$ on $\pg$ which is $\Ad(K)$-{\it invariant}, i.e.\  $\Ad(z)|_\pg\cdot\gamma=\gamma$ for any $z\in K$, or equivalently if $K$ is connected, $\theta(\ad{Z}|_\pg)\gamma=0$ for all $Z\in\kg$.  We are considering here the usual left $\Gl(\pg)$-action and corresponding $\glg(\pg)$-representation $\theta$ on the finite-dimensional vector space 
$$
T^{r,s}\pg:=\{\gamma:\underbrace{\pg\times\dots\times\pg}_{r}\longrightarrow \underbrace{\pg\otimes\dots\otimes\pg}_{s}:\gamma\;\mbox{is multi-linear}\}
$$ 
of tensors of type $(r,s)$, given by 
\begin{align}
h\cdot\gamma:=& h\cdot_s\gamma(h^{-1}\cdot,\dots,h^{-1}\cdot), \quad\forall h\in\Gl(\pg), \label{hact}\\
\theta(A)\gamma:=& \theta_s(A)\gamma - \gamma(A\cdot,\dots,\cdot)-\dots-\gamma(\cdot,\dots,A\cdot), \quad\forall A\in\glg(\pg), \label{Aact}
\end{align}
where $\cdot_s$ and $\theta_s$ denote the $\Gl(\pg)$-action and $\glg(\pg)$-representation on $\pg\otimes\dots\otimes\pg$ ($s$ times), respectively.  Note that $h\cdot\gamma=(h^{-1})^*\gamma$.  

Conversely, it is easy to see that any $\Ad(K)$-invariant tensor on $\pg$ can be walked around $M$ via the $G$-action to become a $G$-invariant tensor field on $M$.  To sum up, the linear map
$$
(\tca^{r,s}M)^G\longrightarrow (T^{r,s}\pg)^K, \qquad \gamma\longmapsto\gamma_o,
$$
is an isomorphism between the vector space $(\tca^{r,s}M)^G$ of all $G$-invariant tensors fields on $M$ and the finite-dimensional vector space $(T^{r,s}\pg)^K$ of all $\Ad(K)$-invariant tensors on $\pg$.  

An alternative identification of $(\tca^{r,s}M)^G$ is given by the isomorphism 
$$
(\tca^{r,s}M)^G\longrightarrow \tca^{r,s}_0G, \qquad \gamma\longmapsto\pi^*\gamma,
$$
where $\tca^{r,s}_0G\subset\tca^{r,s}G$ is the vector subspace of all $(r,s)$-tensors $\gamma$ on the Lie group $G$ such that the following conditions hold: 
\begin{enumerate}[{\rm (i)}]
\item $\gamma$ is left-invariant. 

\item $\gamma$ is invariant under the action of $K$ on $G$ by conjugation. 

\item $\gamma(\dots,Z,\dots)=0$ for any $Z\in\kg$.   
\end{enumerate}
Note that, algebraically, $\tca^{r,s}_0G$ is identified with
$$
\tca^{r,s}_0\ggo:=\left\{ \gamma\in T^{r,s}\ggo:\gamma\;\mbox{is $\Ad(K)$-invariant and (iii) holds}\right\},
$$
and the linear map 
$$
(T^{r,s}\pg)^K\longrightarrow T^{r,s}_0\ggo, \qquad \gamma\longmapsto d\pi|_e^*\gamma,
$$
is an isomorphism.  In the context of differential forms, the differentials of forms $d_G$ and $d_M$ on $G$ and $M$, respectively, satisfy that
$$
d_G\left(d\pi|_e\right)^*\gamma = \left(d\pi|_e\right)^*d_M\gamma, \qquad \forall \gamma\in (\Lambda^k\pg^*)^K\subset (T^{k,0}\pg)^K,  
$$
and thus
$$
d_M\gamma(X_1,\dots,X_{k+1}) =\sum_{i<j} (-1)^{i+j}\gamma([X_i,X_j]_\pg,X_1,\dots,\widehat{X}_i,\dots,\widehat{X}_j,\dots,X_k),
$$
for any $X_1,\dots,X_{k+1}\in\pg$, where $[X_i,X_j]_\pg$ is the projection of the Lie bracket $[X_i,X_j]$ on $\pg$ relative to $\ggo=\kg\oplus\pg$ and $\widehat{X}_i$ means that $X_i$ has to be deleted.

\subsection{Equivalence}\label{equiv-sec}
A diffeomorphism of a homogeneous space $M=G/K$ is called {\it equivariant} if it is given by an automorphism of $G$ taking $K$ onto $K$.  Let $\Aut(G/K)$ denote the group of all equivariant diffeomorphisms of $M=G/K$.  It is easy to check that $\Aut(G/K)$ acts on $(\tca^{r,s}M)^G$ and two $G$-invariant geometric structures (or tensor fields) are said to be {\it equivariantly equivalent} when they belong to the same $\Aut(G/K)$-orbit.    

In the case when $G$ is simply connected and $K$ connected (in particular, $M$ simply connected), which will be assumed in the rest of this subsection, $\Aut(G/K)$ can be identified with the Lie group
$$
\Aut(\ggo/\kg):=\left\{ \overline{h}\in\Aut(\ggo):\overline{h}(\kg)=\kg\right\},
$$ 
with Lie algebra $\Der(\ggo/\kg):=\{ D\in\Der(\ggo):D(\kg)\subset\kg\}$.  At the Lie algebra level, $\Aut(\ggo/\kg)$ acts on $(T^{r,s}\pg)^K$ by 
$$
\gamma\mapsto h\cdot\gamma=(h^{-1})^*\gamma, \qquad \mbox{where}\quad \overline{h}=\left[\begin{matrix} \ast&\ast\\ 0&h\end{matrix}\right]\in\Aut(\ggo/\kg),
$$
giving rise to the equivariant equivalence in the simply connected case.  Note that $\Ad{K}\subset\Aut(\ggo/\kg)$.  

\begin{remark}
Two non-equivariantly equivalent $G$-invariant geometric structures $\gamma$ and $\gamma'$ on a homogeneous space $M=G/K$ may be still equivalent in the general sense as in Section \ref{intro}, that is, $\gamma'=f^*\gamma$ for some $f\in\Diff(M)$.  
\end{remark} 

The computation of $(T^{r,s}\pg)^K$ is a technical problem in representation theory whose difficulty strongly depends on the decomposition of $\pg$ into irreducible and isotypic components and the type of each irreducible component $\pg_i$.  Recall that $\pg_i$ is said to be of {\it real}, {\it complex} or {\it quaternionic type}, depending on whether the space of intertwining operators $\End_K(\pg_i)$ is $\RR$, $\CC$ or $\HH$, respectively.  However, the existence problem of a structure $\gamma\in(T^{r,s}\pg)^K$ satisfying in addition certain non-degeneracy condition can be very tricky.  On the other hand, the moduli space 
$$
\Gamma/\Aut(\ggo/\kg)\subset(T^{r,s}\pg)^K/\Aut(\ggo/\kg)
$$ 
of a space $\Gamma$ of $G$-invariant geometric structures on $M=G/K$, up to equivariant equivalence, is an object much harder to compute or understand.  

A larger group acting on $(T^{r,s}\pg)^K$ is the normalizer
$$
N_{\Gl(\pg)}(\ad{\kg}|_\pg):=\{ h\in\Gl(\pg):h\ad{\kg}|_\pg h^{-1}\subset\ad{\kg}|_\pg\}.  
$$
This follows from the fact that  
$$
\theta(\ad{\kg}|_\pg)h\cdot\gamma =  h\cdot\theta(h^{-1}\ad{\kg}|_\pg h)\gamma =0, \qquad\forall\; h\in N_{\Gl(\pg)}(\ad{\kg}|_\pg), \quad\gamma\in (T^{r,s}\pg)^K.
$$  
Note that $\Aut(\ggo/\kg)|_\pg$ and the centralizer $C_{\Gl(\pg)}(\ad{\kg}|_\pg)$ are both subgroups of $N_{\Gl(\pg)}(\ad{\kg}|_\pg)$.  This action is many times a useful tool to obtain non-equivariantly equivalent $G$-invariant geometric structures from a given one.

\subsection{Operators versus tensors}\label{ope-sec}
A property that is satisfied by many classes of geometric structures is that the orbit $\Gl(\pg)\cdot\gamma$
is open in a certain vector subspace $T\subset T^{r,s}\pg$ and consists precisely of those tensors which are non-degenerate in some sense.  In that case, $\theta(\glg(\pg))\gamma = T$, and thus if we consider the $\Gl(\pg)_\gamma$-invariant decomposition,  
\begin{equation}\label{dec-g}
\glg(\pg)=\glg(\pg)_\gamma\oplus\qg_\gamma,
\end{equation}
then $\theta(\qg_\gamma)\gamma=T$, where $\glg(\pg)_\gamma:=\{ A\in\glg(\pg):\theta(A)\gamma=0\}$ is the Lie algebra of the stabilizer subgroup $\Gl(\pg)_\gamma$.  Moreover, for every tensor $q\in T$, there exists a unique operator $Q\in\qg_\gamma$ such that
\begin{equation}\label{nondeg2}
q=\theta(Q)\gamma.
\end{equation}
Note that $\theta(I)\gamma=(s-r)\gamma$ for any $\gamma\in T^{r,s}\pg$ (see \eqref{Aact}).  

Some examples of geometric structures for which this translation to operators works follow (see \cite{BF, solitons} for further information).  

\begin{enumerate}[{\small $\bullet$}]
\item A Riemannian metric $\gamma=g$:  
$$
T=\sca^2\pg^*\subset T^{2,0}\pg, \qquad \glg(\pg)_\gamma=\sog(\pg), \qquad \qg_\gamma=\sym(\pg):=\{ A\in\glg(\pg):A^t=A\}.
$$
For example, if $q=-2\ricci(g)$, where $\ricci(g)$ is the Ricci tensor of $g$, then $Q=\Ricci(g)$, the Ricci operator of $g$.

\item A pseudo-Riemannian metric $\gamma=g$ of signature $(p,q)$ ($\dim{\pg}=p+q$): $T=\sca^2\pg^*\subset T^{2,0}\pg$,
\begin{align*}
\glg(\pg)_\gamma=&\{ A\in\glg(\pg):g(A\cdot,\cdot)=-g(\cdot,A\cdot)\}\simeq\sog(p,q), \\ \qg_\gamma=&\{ A\in\glg(\pg):g(A\cdot,\cdot)=g(\cdot,A\cdot)\}.
\end{align*}

\item An almost-symplectic structure $\gamma=\omega$ ($\dim{\pg}=2n$): $T=\Lambda^2\pg^*\subset T^{2,0}\pg$,
\begin{align*}
\glg(\pg)_\gamma=&\{ A\in\glg(\pg):\omega(A\cdot,\cdot)=-\omega(\cdot,A\cdot)\}\simeq\spg(n,\RR), \\ \qg_\gamma=&\{ A\in\glg(\pg):\omega(A\cdot,\cdot)=\omega(\cdot,A\cdot)\}.
\end{align*}

\item An almost-hermitian structure $\gamma=(\omega,g,J)$, $\omega=g(J\cdot,\cdot)$ ($\dim{\pg}=2n$):
\begin{align*}
T=&\left\{ (\overline{\omega},\overline{g})\in\Lambda^2\pg^*\times\sca^2\pg^* : \overline{g}^{1,1}=\overline{\omega}^{1,1}(\cdot,J\cdot)\right\}\subset T^{2,0}\pg\times T^{2,0}\pg, \\
\glg(\pg)_\gamma=&\{ A\in\sog(\pg):AJ=JA\}\simeq\ug(n), \qquad \qg_\gamma=\qg_1\oplus\qg_2\oplus\qg_3,
\end{align*}
where,
\begin{align*}
\qg_1=&\{ A\in\glg(\pg):A^t=A, \quad AJ=-JA\}, \\ 
\qg_2=&\{ A\in\glg(\pg):A^t=-A, \quad AJ=-JA\}, \\ 
\qg_3=&\{ A\in\glg(\pg):A^t=A, \quad AJ=JA\}.
\end{align*}
Note that $\spg(n,\RR)=\ug(n)\oplus\qg_1$, $\sog(2n)=\ug(n)\oplus\qg_2$ and $\glg_n(\CC)=\ug(n)\oplus\qg_3$. 

\item A $G_2$-structure $\gamma=\vp$ ($\dim{\pg}=7$):  
$$
T=\Lambda^3\pg^*\subset T^{3,0}\pg, \qquad \glg(\pg)_\gamma=\ggo_2, \qquad \qg_\gamma=\qg_1\oplus\qg_7\oplus\qg_{27},
$$
where $\qg_1=\RR I$, $\sog(\pg)=\ggo_2\oplus\qg_7$ and $\qg_{27}=\sym_0(\pg):=\{ A\in\sym(\pg):\tr{A}=0\}$.  Here, an example of preferred direction is $q(\vp)=\Delta_\vp\vp$, where $\Delta_\vp$ is the Hodge-Laplacian on $3$-forms (giving rise to the $G_2$-Laplacian flow), and $Q_\vp$ can be written in terms of the Ricci operator and the torsion (see \cite[Section 3]{LS-ERP}).     
\end{enumerate}

\section{Solitons on homogeneous spaces}\label{alg-sol-sec}

In this section, we study solitons on a fixed homogeneous space $M=G/K$.  As in Section \ref{intro}, we consider a space of geometric structures $\Gamma$ and assume that every $\gamma\in\Gamma$ is $G$-invariant, that is, 
$$
\Gamma\subset (T^{r,s}\pg)^K,
$$
where $\ggo=\kg\oplus\pg$ is any reductive decomposition as in Section \ref{ten-sec}.  Accordingly, the equivalence between $G$-invariant structures on $M$ is defined by scalings and equivariant diffeomorphisms of $M$ (see Section \ref{equiv-sec}).  Let us assume that $\Gamma$ is $\Aut(G/K)$-invariant.     

As a preferred direction $q$ we can choose any direction from the general case (see Section \ref{intro}), which produces a $G$-invariant preferred direction determined by a function 
$$
\gamma\mapsto q(\gamma)\in T_\gamma\Gamma\subset (T^{r,s}\pg)^K.  
$$ 
In this light, once the space $\Gamma$ and the preferred direction $q$ have been chosen, we obtain that if $\gamma\in\Gamma$ is a soliton in the sense of \eqref{sol-def}, called a {\it semi-algebraic soliton} in the literature, then   
\begin{equation}\label{semi-alg-sol}
q(\gamma)=c\gamma+\theta(D_\pg)\gamma, \qquad \mbox{for some} \quad c\in\RR, \quad 
D=\left[\begin{matrix} \ast&\ast\\ 0&D_\pg\end{matrix}\right]\in\Der(\ggo/\kg), 
\end{equation}
where $\theta$ denotes the usual $\glg(\pg)$-representation on tensors (see \eqref{Aact}).  Indeed, we have that  
\begin{equation}\label{lieD}
\theta(D_\pg)\gamma=-\lca_{X_D}\gamma = -\left. \ddt\right |_{t=0} f(t)^*\gamma \in T_\gamma\Aut(G/K)\cdot\gamma, 
\end{equation}
where $f(t)\in\Aut(G/K)$, $D=\left. \ddt\right |_{t=0} df(t)|_e\in\Der(\ggo/\kg)$ and $\lca_{X_D}$ denotes the Lie derivative with respect to the vector field $X_D\in\mathfrak{X}(M)$ defined by 
$$
X_D(p)=\left. \ddt\right |_{t=0} f(t)(p), \qquad\forall p\in M.
$$ 
In particular, $(G/K,\gamma)$ is of course a soliton from the general point of view considered in \eqref{sol-def-DG}.    

Conversely, in the simply connected case, any $(G/K,\gamma)$ satisfying \eqref{semi-alg-sol} is in fact a semi-algebraic soliton.  Indeed, the existence of the automorphisms $f(t)\in\Aut(G/K)$ such that $df(t)|_e=e^{tD}\in\Aut(\ggo/\kg)$ is guaranteed for any $D\in\Der(\ggo/\kg)$.   Note that the fact that $\theta(D_\pg)\gamma\in T_\gamma\Gamma$ automatically follows from \eqref{semi-alg-sol}.  

\begin{remark}
The vector fields $X_D$'s described above can be viewed as  generalizations of {\it linear vector fields} on $\RR^n$ (i.e.\ $X_v=Av$, $A\in\glg_n(\RR)$) and have been strongly used in control theory on Lie groups after the pioneering article \cite{AylTir}.   Since $X_D(o)=0$, none of these vector fields can belong to $\pg$ (see \eqref{ideT}).   
\end{remark}

Unlike the concept of soliton, which is invariant under pull-back by diffeomorphisms, the concept of semi-algebraic soliton is not a geometric invariant, as it may depend on the presentation of the homogeneous geometric structure $(M,\gamma)$ as a homogeneous space $(G/K,\gamma)$.  

On a given homogeneous space $M=G/K$, the corresponding geometric flow  
\begin{equation}\label{flow-LG}
\ddt\gamma(t)=q(\gamma(t)), \qquad \gamma(0)=\gamma,
\end{equation}
is actually an ODE rather than a PDE.  Indeed, the solutions to \eqref{flow-LG} are precisely the integral curves of the vector field $q$ on the finite-dimensional vector space $(T^{r,s}\pg)^K$.  In particular, short time existence (forward and backward) of $G$-invariant solutions and their uniqueness (among $G$-invariant solutions) are always guaranteed.  

We assume from now on that $G$ is simply connected and $K$ is connected.  Notice that $\gamma$ is a semi-algebraic soliton if and only if the solution $\gamma(t)$ to \eqref{flow-LG} is given by
\begin{equation}\label{sas-ss}
\gamma(t)=c(t)f(t)^*\gamma, \qquad\mbox{for some}\quad c(t)\in\RR^*, \quad f(t)\in\Aut(G/K),
\end{equation}
that is, $\gamma(t)$ is self-similar relative to equivariant equivalence (cf.\ \eqref{sss}).  More precisely, it is easy to check that condition \eqref{semi-alg-sol} holds for $(G/K,\gamma)$ if and only if 
\begin{equation}\label{sas-evg}
\gamma(t) = c(t) e^{s(t)D_\pg}\cdot\gamma, 
\end{equation}
is a solution to \eqref{flow-LG}, where 
$$
c(t):=\left((1-\alpha)ct+1\right)^{\frac{1}{1-\alpha}}, \qquad s(t):=\frac{\log((1-\alpha)ct+1)}{(1-\alpha)c} \quad  (\mbox{set}\; s(t):=t \;\mbox{if}\; c=0).   
$$
Recall that $\alpha<1$ (see \eqref{alfa}) and note that $c(0)=1$, $c'(t)=c\, c(t)^\alpha$ and $s(0)=0$, $s'(t)=c(t)^{\alpha-1}$.   

The following result is essentially contained in \cite[Theorem 3.1]{Jbl}.  

\begin{proposition}\label{Kcomp}
Let $(G/K,\gamma)$ be a semi-algebraic soliton and assume that $K$ is compact.  Then there exist $f(t)\in\Aut(G/K)$, $f(0)=id$, such that,
\begin{enumerate}[{\rm (i)}]
\item the solution to \eqref{flow-LG} is given by $\gamma(t)=c(t)f(t)^*\gamma$, for some $c(t)\in\RR^*$;
\item $f(t)|_K=id$ for all $t$.
\end{enumerate}
\end{proposition}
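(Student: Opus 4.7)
The plan is to start with a derivation $D\in\Der(\ggo/\kg)$ realizing the semi-algebraic soliton equation \eqref{semi-alg-sol}, and to replace it by $\tilde D:=D-\ad Z$ for a suitable $Z\in\kg$ so that the new derivation still satisfies \eqref{semi-alg-sol} but in addition vanishes on $\kg$. Once $\tilde D|_\kg=0$, the equivariant diffeomorphism $f(t)$ associated to $\tilde D$ will automatically fix $K$ pointwise, giving (ii), while (i) will follow from applying the self-similar formula \eqref{sas-evg} to $\tilde D$ together with uniqueness of the solution to \eqref{flow-LG}.

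To produce $Z$, I would let $\phi(t)=e^{tD}\in\Aut(G)$, which exists because $G$ is simply connected. Since $D(\kg)\subset\kg$ and $K$ is connected, $\phi(t)(K)=K$, and therefore $\phi(t)|_K$ is a smooth one-parameter subgroup of the Lie group $\Aut(K)$; in particular it lies in $\Aut(K)^0$. At this point I invoke the classical fact that $\Aut(K)^0=\Inn(K)$ for any compact connected Lie group $K$; differentiating at $t=0$ then yields
$$
D|_\kg\in\Lie(\Inn(K))=\ad\kg,
$$
so $D|_\kg=\ad Z|_\kg$ for some $Z\in\kg$. Setting $\tilde D:=D-\ad Z$, one has $\tilde D\in\Der(\ggo/\kg)$ (because $\ad Z$ preserves the reductive decomposition), $\tilde D|_\kg=0$, and $\tilde D_\pg=D_\pg-(\ad Z)|_\pg$. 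The $\Ad(K)$-invariance of $\gamma$ yields $\theta((\ad Z)|_\pg)\gamma=0$, whence $\theta(\tilde D_\pg)\gamma=\theta(D_\pg)\gamma$ and \eqref{semi-alg-sol} continues to hold for $\tilde D$ with the same constant $c$.

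To finish, let $\tilde\phi(t)=e^{t\tilde D}\in\Aut(G)$ and let $f(t)\in\Aut(G/K)$ be the equivariant diffeomorphism of $M$ determined by $\tilde\phi$ through \eqref{sas-evg}, with $f(0)=id$. Since $\tilde D|_\kg=0$, $\tilde\phi(t)$ fixes $K$ pointwise and so $f(t)|_K$ is the identity, which is (ii). For (i), applying \eqref{sas-evg} to the data $(\tilde D,c)$ produces a self-similar solution $c(t)e^{s(t)\tilde D_\pg}\cdot\gamma=c(t)f(t)^*\gamma$ of the ODE \eqref{flow-LG}, which by uniqueness must equal $\gamma(t)$. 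The main obstacle is the identity $\Aut(K)^0=\Inn(K)$: this is a classical fact for compact connected Lie groups, combining the discreteness of the automorphism group of a compact torus (which forces $\Aut(K)^0$ to act trivially on the central torus of $K$) with the equality $\Aut(L)^0=\Inn(L)$ for the compact semisimple factor $L=[K,K]$.
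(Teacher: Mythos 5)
Your argument is correct, and it hinges on exactly the same key fact as the paper's proof, namely that $\Aut(K)_0=\Inn(K)$ for a compact connected Lie group $K$; the difference lies in where this fact is applied. The paper works at the group level with the self-similar family itself: writing $\gamma(t)=c(t)g(t)^*\gamma$ with $g(t)\in\Aut(G/K)$, $g(0)=id$, as in \eqref{sas-ss}, it observes that $g(t)|_K\in\Aut(K)_0=\Inn(K)$, so $g(t)|_K=I_{a(t)}$ for some $a(t)\in K$, and then corrects by setting $f(t):=I_{a(t)}^{-1}\circ g(t)$, which fixes $K$ pointwise and still satisfies $f(t)^*\gamma=g(t)^*\gamma$ because $dI_{a(t)}|_o=\Ad(a(t))$ preserves $\gamma$. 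You instead apply the same fact infinitesimally, correcting the derivation $D$ of \eqref{semi-alg-sol} to $\tilde D=D-\ad Z$ with $\tilde D|_\kg=0$, using $\theta(\ad Z|_\pg)\gamma=0$ to keep \eqref{semi-alg-sol} intact, and then rebuild the solution from $\tilde D$ via \eqref{sas-evg} and ODE uniqueness. Both corrections are legitimate and all the ingredients you use ($G$ simply connected, $K$ connected, $\Ad(K)$-invariance of $\gamma$) are standing assumptions at this point of the paper. The paper's version is slightly more economical, needing only the already established equivalence \eqref{sas-ss}; yours has the added value of producing an $f(t)$ which is a reparametrized one-parameter group and of showing explicitly that the derivation in \eqref{semi-alg-sol} may be normalized to vanish on $\kg$, which is precisely the normalization \eqref{D00} exploited later in the paper.
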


\begin{proof}
According to \eqref{sas-ss}, $\gamma(t)=c(t)g(t)^*\gamma$, for some $c(t)\in\RR^*$ and $g(t)\in\Aut(G/K)$.  We can assume that $g(0)=id$ by just composing with $g(0)^{-1}$, which belongs to $\Aut(M,\gamma)$ as $\gamma(0)=\gamma$.  Since $K$ is connected and compact, the identity component $\Aut(K)_0$ of $\Aut(K)$ consists of inner automorphisms, and thus there exist $a(t)\in K$, $a(0)=e$, such that $g(t)|_K=I_{a(t)}$, the conjugation map by $a(t)$.  But $I_{a(t)}\in\Aut(G/K)$ and is an automorphism of $\gamma$ (recall that $dI_{a(t)}|_o=\Ad(a(t))\in\Ad(K)$), so $f(t):=I_{a(t)}^{-1}\circ g(t)$ satisfies that $f(t)^*\gamma=g(t)^*\gamma$ and $f(t)|_K=id$ for all $t$, concluding the proof.  
\end{proof}

In the case when the reductive decomposition $\ggo=\kg\oplus\pg$ for $G/K$ with $B(\kg,\pg)=0$ is considered, where $B$ is the Killing form of $\ggo$, it follows that $D\pg\subset\pg$ for any $D\in\Der(\ggo/\kg)$ (see \cite[Lemma 3.10]{homRS}).  If in addition $K$ is compact, then by Proposition \ref{Kcomp} the derivation in the definition \eqref{semi-alg-sol} of a semi-algebraic soliton $(G/K,\gamma)$ takes the simpler form 
\begin{equation}\label{D00}
D=\left[\begin{matrix} 0&0\\ 0&D_\pg\end{matrix}\right]\in\Der(\ggo/\kg).
\end{equation}

Recall that $\Gamma$ is typically contained in a single $\Gl(\pg)$-orbit.  We assume for the rest of this section that $\Gl(\pg)\cdot\gamma$ is in addition open in a suitable subspace $T\subset T^{r,s}\pg$, as in Section \ref{ope-sec}.  In that case, in terms of the operator viewpoint proposed in Section \ref{ope-sec}, the semi-algebraic soliton condition \eqref{semi-alg-sol} for $(G/K,\gamma)$ can be rewritten as 
\begin{equation}\label{sas-Q} 
Q_\gamma = \tfrac{c}{s-r} I + \proy_\gamma(D_\pg), 
\end{equation}  
where $q(\gamma)=\theta(Q_\gamma)\gamma$, $Q_\gamma\in\qg_\gamma$ (see \eqref{nondeg2}) and $\proy_\gamma:\glg(\pg)\rightarrow\qg_\gamma$ is the projection relative to decomposition \eqref{dec-g}.  The matrix equation \eqref{sas-Q} has been extremely useful to explore the existence and study the structure of semi-algebraic solitons in many different contexts.    

In the special case when \eqref{sas-Q} holds for a derivation $D\in\Der(\ggo/\kg)$ such that 
\begin{equation}\label{as-Q} 
\proy_\gamma(D_\pg)=D_\pg, 
\end{equation} 
$(G/K,\gamma)$ is called an {\it algebraic soliton}.  It is worth noticing that being an algebraic soliton may a priori not only depend on the homogeneous space presentation, but also on the reductive decomposition $\ggo = \kg \oplus \pg$ chosen.  Algebraic solitons are also distinguished from other points of view, as bracket flow evolution (see Section \ref{mba-sec}) and the flow diagonal property (see Section \ref{fd-sec}).   
 
Any homogeneous Ricci soliton is isometric to an algebraic soliton (see \cite{Jbl2}).  On the other hand, examples of $G_2$-Laplacian and pluriclosed semi-algebraic solitons which are not isometric to any algebraic soliton were respectively found in \cite{Ncl,LF} and \cite{ArrLfn}. 

Interplaying geometric and algebraic aspects of the homogeneous structure $(G/K,\gamma)$, the role of algebraic solitons has been crucial in the study of homogeneous Ricci solitons.  On the other hand, they have also used to obtain explicit examples of homogeneous soliton geometric structures in the case of pseudo-Riemannian, complex, symplectic and $G_2$ geometries.  A far from complete list of references follows.    

\begin{enumerate}[{\small $\bullet$}]  
\item Ricci solitons \cite{Nkl, Frn, solvsolitons, Wll, KdgPyn, Jbl, homRS, Jbl2, alek, JblPtrWll, ArrLfn2} (see the survey \cite{cruzchica} for references previous to 2008).  

\item Pseudo-Riemannian Ricci solitons \cite{ClvGrc, ClvFin, Ond}.  

\item Chern-Ricci solitons \cite{SCF, CRF}.   

\item Pluriclosed solitons \cite{ArrLfn}.   

\item Hermitian curvature flow solitons \cite{Puj, LfnPujVzz}.  

\item Anti-complexified Ricci solitons \cite{minimal, Frn2}.  

\item Symplectic curvature flow solitons \cite{SCF, Frn2, SCFmuA}.    

\item $G_2$-Laplacian solitons \cite{Ncl, LF, LS-ERP, FinRff3, FinRff, BggFin, ERP, MrnSrp}.   \end{enumerate}

\begin{remark}
Given a semi-algebraic soliton $(G/K,\gamma)$, if $G$ admits a cocompact discrete subgroup $\Lambda$, then $\Lambda$ acts freely and properly discontinuous on $G/K$ and the solution $\gamma(t)$ is also a solution to \eqref{flow-LG} on the compact manifold $M'=\Lambda \backslash G/K$, which is locally diffeomorphic to $M=G/K$.  However, the locally homogeneous manifold $(M',\gamma)$ is not longer a soliton in general since the field $X_D$ may not descend to $M'$.  The solution $(M,\gamma(t))$ is very peculiar though, it is `locally self-similar' in the sense that $\gamma(t)$ is locally equivalent to $\gamma$ up to scaling for all $t$.  
\end{remark}

\section{On the evolution of homogeneous geometric structures}\label{evol-sec} 

We assume in this section that $\Aut(M,\gamma)$ is a Lie group.  Recall that this holds, for example, when there is a Riemannian metric attached to $\gamma$ (see Remark \ref{metg}).

\begin{proposition}\label{exist}
For any homogeneous geometric structure $(M,\gamma)$, there exists a unique solution $\gamma(t)$ to \eqref{flow-DG} which is $\Aut(M,\gamma)$-invariant.  Furthermore, $\gamma(t)$ has the following properties: 

\begin{enumerate}[{\rm (i)}]
\item $\gamma(t)$ is defined in a maximal interval of time $(T_-,T_+)$, where $T_-<0<T_+$.

\item $\Aut(M,\gamma(t))=\Aut(M,\gamma)$ for all $t\in(T_-,T_+)$.

\item For any transitive Lie subgroup of automorphisms $G\subset\Aut(M,\gamma)$, $\gamma(t)$ is the unique $G$-invariant solution to \eqref{flow-LG} on the corresponding homogeneous space $M=G/K$.
\end{enumerate}
\end{proposition}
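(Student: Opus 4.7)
The plan is to exploit the homogeneity to reduce the flow from a PDE on tensor fields to a smooth ODE on a finite-dimensional vector space, and then to harvest the three items from ODE uniqueness. Set $G:=\Aut(M,\gamma)$, let $K$ be its isotropy at some origin $o\in M$, and fix a reductive decomposition $\ggo=\kg\oplus\pg$ for $M=G/K$ (which exists by Remark~\ref{metg} when $\gamma$ carries a Riemannian metric, and otherwise may be replaced by working with $\ggo/\kg$). Then $G$-invariant structures in $\Gamma$ are identified with the finite-dimensional vector space $V:=(T^{r,s}\pg)^K$, and diffeomorphism equivariance of $q$ gives, for $\tilde\gamma\in V$ and $a\in G$, that $a^*q(\tilde\gamma)=q(a^*\tilde\gamma)=q(\tilde\gamma)$, so $q$ restricts to a smooth vector field on $V$. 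Standard ODE theory then yields a unique maximal $G$-invariant solution $\gamma(t)$ defined on an open interval $(T_-,T_+)\ni 0$, which covers the existence/uniqueness claim together with (i).

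For (ii), the inclusion $\Aut(M,\gamma)\subset\Aut(M,\gamma(t))$ is immediate: for any $a\in G$, diffeomorphism equivariance makes $a^*\gamma(t)$ a $G$-invariant solution in $V$ with initial value $a^*\gamma=\gamma$, so ODE uniqueness on $V$ forces $a^*\gamma(t)=\gamma(t)$. For the reverse, fix $t_1\in(T_-,T_+)$ and $h\in\Aut(M,\gamma(t_1))$, and consider the linear subspace
$$
W:=\{\tilde\gamma\in V:h^*\tilde\gamma=\tilde\gamma\}\subset V.
$$
Diffeomorphism equivariance ensures that $q$ restricts to a smooth vector field on $W$: if $h^*\tilde\gamma=\tilde\gamma$, then $h^*q(\tilde\gamma)=q(h^*\tilde\gamma)=q(\tilde\gamma)$. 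ODE theory on $W$ then produces a unique $W$-valued solution starting at $\gamma(t_1)\in W$, which \emph{a fortiori} is a $G$-invariant solution; by ODE uniqueness on $V$ it must coincide with $s\mapsto\gamma(t_1+s)$. Hence $\gamma(t_1+s)\in W$ for all admissible $s$, and evaluating at $s=-t_1$ yields $h^*\gamma=\gamma$, i.e.\ $h\in\Aut(M,\gamma)$.

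Property (iii) is a direct repetition of the same principle: for any transitive Lie subgroup $G'\subset\Aut(M,\gamma)$ with isotropy $K'$, the flow restricts to a smooth ODE on the finite-dimensional space of $G'$-invariant tensors; since $\gamma(t)$ is already $\Aut(M,\gamma)$-invariant and $G'\subset\Aut(M,\gamma)$, it is automatically $G'$-invariant and so gives a solution there, which ODE uniqueness forces to be the only $G'$-invariant solution to \eqref{flow-LG}. The delicate step in the whole proof is the reverse inclusion in (ii); the key move is the introduction of the $h^*$-invariant subspace $W\subset V$, which reduces matters to finite-dimensional ODE uniqueness and entirely bypasses the question of whether $\Aut(M,\gamma(t))$ is itself a Lie group for $t\ne 0$.
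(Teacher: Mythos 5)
Your proposal is correct and follows the same overall strategy as the paper: reduce \eqref{flow-DG} to an ODE for the vector field $q$ on the finite-dimensional space $V=(T^{r,s}\pg)^{K}$ of $\Aut(M,\gamma)$-invariant tensors, get existence, uniqueness and (i) from standard ODE theory, and get (iii) and the easy inclusion $\Aut(M,\gamma)\subset\Aut(M,\gamma(t))$ from uniqueness. The one step where you genuinely diverge is the reverse inclusion in (ii). The paper argues at the level of groups: since $\Aut(M,\gamma)\subset\Aut(M,\gamma(t_0))$, the structure $(M,\gamma(t_0))$ is again homogeneous, so the first part of the proposition applies to it and yields an $\Aut(M,\gamma(t_0))$-invariant solution through $\gamma(t_0)$, which by uniqueness among $\Aut(M,\gamma)$-invariant solutions must be $t\mapsto\gamma(t+t_0)$; evaluating at $t=-t_0$ gives $\Aut(M,\gamma(t_0))\subset\Aut(M,\gamma)$. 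You instead fix a single $h\in\Aut(M,\gamma(t_1))$ and flow inside the linear subspace $W=\{\tilde\gamma\in V: h^{*}\tilde\gamma=\tilde\gamma\}$, which $q$ preserves by equivariance; uniqueness in $V$ forces $\gamma(t_1+s)\in W$, and $s=-t_1$ gives $h^{*}\gamma=\gamma$. The two arguments are logically equivalent, but yours buys exactly what you claim: it never re-invokes the existence statement for the a priori larger group $\Aut(M,\gamma(t_0))$ (and hence never needs that group to be a Lie group, which the paper's version implicitly requires through the section's standing assumption applied to $\gamma(t_0)$), only that $h^{*}$ acts linearly on tensors and commutes with $q$. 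One point worth making explicit: to know that $s=-t_1$ is ``admissible'' you should note that, $W$ being a closed subspace of $V$, the maximal $W$-valued solution through $\gamma(t_1)$ has the same interval of definition as the $V$-valued one (the set of $s$ with $\gamma(t_1+s)\in W$ is open and closed in $(T_{-}-t_1,T_{+}-t_1)$), and that $-t_1$ lies in this interval precisely because $T_-<0<T_+$.
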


\begin{proof}
The existence and uniqueness of the solution $\gamma(t)$ and part (i) follow from the paragraph below \eqref{flow-LG} applied to $G=\Aut(M,\gamma)$.  On the other hand, part (iii) follows from uniqueness and the fact that $\gamma(t)$ is also $G$-invariant for any $G\subset\Aut(M,\gamma)$.  This implies that given $t_0$, since $\Aut(M,\gamma)\subset\Aut(M,\gamma(t))$ for all $t$, we have that $\gamma(t+t_0)$ is the $\Aut(M,\gamma)$-invariant solution starting at $\gamma(t_0)$.  Thus $\gamma(t+t_0)$ is also $\Aut(M,\gamma(t_0))$-invariant and so $\Aut(M,\gamma(t_0))\subset\Aut(M,\gamma)$, by evaluating at $t=-t_0$, and part (ii) holds, concluding the proof.
\end{proof}

Since the uniqueness of geometric flow solutions is in most cases established only for compact manifolds, we do not know a priori if there are homogeneous solutions other than $\gamma(t)$ starting at a noncompact homogeneous $(M,\gamma)$.  This seems to be very unlikely though.

The following result was proved in \cite[Theorem 3.1]{Jbl} for Ricci solitons; we essentially follow the lines of that proof.

\begin{theorem}\label{sas-hom}
Let $(M,\gamma)$ be a soliton as in \eqref{sol-def-DG} which is homogeneous.  If the solution $\gamma(t)$ (see Proposition \ref{exist}) is self-similar as in \eqref{sss}, then $(G/K,\gamma)$ is a semi-algebraic soliton for the presentation $M=G/K$ with $G=\Aut(M,\gamma)$.  
\end{theorem}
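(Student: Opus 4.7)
The plan is to adapt the Ricci-soliton argument of Jablonski \cite{Jbl} to the general geometric setting: produce a curve in $\Aut(G/K)$ realizing the self-similarity of $\gamma(t)$, and read off the required derivation from its derivative at $t=0$.

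Start from the self-similar solution $\gamma(t)=c(t)f(t)^*\gamma$ with $f(0)=\mathrm{id}$ guaranteed by the hypothesis. Proposition \ref{exist}~(ii) yields $\Aut(M,\gamma(t))=G$, while on the other hand $\Aut(M,f(t)^*\gamma)=f(t)^{-1}Gf(t)$; hence $f(t)$ normalizes $G$ inside $\Diff(M)$. Next, modify $f(t)$ so that it fixes the origin $o$: choose a smooth local section $\sigma$ of $G\to G/K=M$ around $o$ with $\sigma(o)=e$, set $a(t):=\sigma(f(t)(o))^{-1}\in G$ (a smooth curve with $a(0)=e$), and put $h(t):=\tau_{a(t)}\circ f(t)$. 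Since $\tau_{a(t)}\in G\subset\Aut(M,\gamma)$, the identity $\gamma(t)=c(t)h(t)^*\gamma$ still holds, while now $h(t)(o)=o$ and $h(t)$ continues to normalize $G$.

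The key point is that $h(t)$ is in fact \emph{equivariant}. The map $\Phi_t\colon G\to G$, $\Phi_t(a):=h(t)\,a\,h(t)^{-1}$, is a smooth group automorphism of $G$, and for $k\in K$ one computes $\Phi_t(k)(o)=h(t)k(o)=h(t)(o)=o$, so $\Phi_t(K)\subset K$; applied also to $\Phi_t^{-1}$ this gives $\Phi_t(K)=K$. Consequently $\Phi_t$ induces an equivariant diffeomorphism $\tilde\Phi_t\in\Aut(G/K)$, and the computation
\[
h(t)(a\cdot o)=h(t)\,a\,h(t)^{-1}\bigl(h(t)(o)\bigr)=\Phi_t(a)\cdot o=\tilde\Phi_t(a\cdot o)
\]
shows that $h(t)=\tilde\Phi_t\in\Aut(G/K)$ on all of $M=G\cdot o$.

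Finally, set $D:=\ddt|_{t=0}d\Phi_t|_e\in\Der(\ggo)$; the relation $\Phi_t(K)=K$ forces $D(\kg)\subset\kg$, so $D$ has the block form required in \eqref{semi-alg-sol}. Differentiating $\gamma(t)=c(t)h(t)^*\gamma$ at $t=0$ and applying \eqref{lieD} with $X_D$ the vector field on $M$ generated by $h(t)$ yields $\ddt|_{t=0}h(t)^*\gamma=\lca_{X_D}\gamma=-\theta(D_\pg)\gamma$, and therefore
\[
q(\gamma)=c'(0)\gamma-\theta(D_\pg)\gamma=c'(0)\gamma+\theta((-D)_\pg)\gamma,
\]
which is the semi-algebraic soliton equation \eqref{semi-alg-sol} with derivation $-D\in\Der(\ggo/\kg)$. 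The main obstacle is the equivariance step: establishing that the curve $h(t)$, a priori only in $\Diff(M)$, is actually induced by automorphisms of $G$ preserving $K$. The remaining pieces -- smoothness of $a(t)$ via the local section, and the sign bookkeeping in the Lie derivative -- are routine.
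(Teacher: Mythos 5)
Your proposal is correct and follows essentially the same route as the paper's proof (both modeled on Jablonski's argument): use Proposition \ref{exist}(ii) to see that the self-similarity diffeomorphisms normalize $G=\Aut(M,\gamma)$, adjust them by elements of $G$ to fix the origin, and observe that conjugation then produces automorphisms of $G$ preserving $K$ which agree with the original diffeomorphisms on $M$. The only differences are cosmetic: you add the local-section argument for smoothness of $a(t)$ and spell out the final differentiation yielding \eqref{semi-alg-sol}, steps the paper leaves implicit via the equivalence \eqref{sas-ss}.
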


\begin{remark}
In particular, $\gamma(t)=c(t)f(t)^*\gamma$ for some $c(t)\in\RR^*$ and $f(t)\in\Aut(G/K)$ such that $f(0)=id$ and $f(t)|_K=id$ for all $t$ by Proposition \ref{Kcomp}, since the isotropy $K$ is compact.  
\end{remark}

\begin{proof}
Consider the presentation $M=G/K$, where $G=\Aut(M,\gamma)$ and $K$ is the isotropy subgroup of $G$ at some $p\in M$.  It follows from \eqref{sss} that $\gamma(t)=c(t)g(t)^*\gamma$ for some $c(t)\in\RR^*$ and $g(t)\in\Diff(M)$.  It can be assumed that $g(0)=id$, and also that $g(t)(p)=p$ for all $t$, by composing with $h(t)\in G$ such that $h(t)^{-1}(p)=g(t)(p)$.  Since $G=\Aut(M,\gamma(t))=g(t)^{-1}Gg(t)$ for all $t$ (see Proposition \ref{exist}, (ii)), we can define an isomorphism $\tilde{f}(t):G\longrightarrow G$ by $\tilde{f}(t)(a):=g(t)ag(t)^{-1}$ for all $a\in G$, which in turns determines $f(t)\in\Aut(G/K)$ as $\tilde{f}(t)(K)\subset K$.  But $f(t)=g(t)$ on $G/K$ for all $t$:
$$
f(t)(aK) = g(t)ag(t)^{-1}K = g(t)ag(t)^{-1}(p) = g(t)(a(p)) = g(t)(aK), \qquad\forall a\in G, 
$$
which concludes the proof.  
\end{proof}

\begin{remark}\label{fmore}
It follows from the proof of the above proposition that if $\gamma(t)=c(t)f(t)^*\gamma$ for some $c(t)\in\RR^*$ and $f(t)\in\Diff(M)$ such that $f(0)=id$, $f(t)(o)=o$, then $f(t)\in\Aut(G/K)$ for all $t$ for $G=\Aut(M,\gamma)$.  
\end{remark}

The uniqueness of solutions has been proved in some cases (e.g.\ for the Ricci flow) among bounded curvature solutions on non-compact manifolds.  We note that this applies to homogeneous solutions.  

\begin{corollary}\label{sas-hom-cor}
Assume that for each homogeneous $(M,\gamma)$, there exists a unique solution $\gamma(t)$ to \eqref{flow-DG} such that $(M,\gamma(t))$ is homogeneous for all $t$.  Then any homogeneous soliton $(M,\gamma)$ is a semi-algebraic soliton when presented as a homogeneous space $(G/K,\gamma)$ with $G=\Aut(M,\gamma)$.
\end{corollary}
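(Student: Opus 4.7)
The plan is to reduce the corollary to Theorem \ref{sas-hom}, whose hypothesis (self-similarity of the $\Aut(M,\gamma)$-invariant solution) needs to be verified. By Theorem \ref{sas-hom}, if $\gamma(t)$ is self-similar in the sense of \eqref{sss}, then $(G/K,\gamma)$ is already a semi-algebraic soliton for $G=\Aut(M,\gamma)$. So the whole game is to produce one $\Aut(M,\gamma)$-invariant solution and recognize it as self-similar via the uniqueness hypothesis.

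First, I would invoke the general soliton theory from Section \ref{intro}. Since $(M,\gamma)$ is a soliton, equation \eqref{sol-def-DG} gives $q(\gamma)=c\gamma+\lca_X\gamma$ for some $c\in\RR^*$, $X\in\mathfrak{X}(M)$, and the standard recipe outlined after \eqref{alfa} manufactures a self-similar solution
\[
\tilde\gamma(t)=c(t)\,f(t)^*\gamma, \qquad c(t)=((1-\alpha)ct+1)^{\frac{1}{1-\alpha}},
\]
where $f(t)\in\Diff(M)$ is obtained by integrating a suitably rescaled multiple of $X$ (with $f(0)=id$), so that $\tilde\gamma(t)$ solves \eqref{flow-DG}. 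This is the content of the equivalence between the soliton equation \eqref{sol-def-DG} and the self-similar form \eqref{sss}.

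Next, I would check that $\tilde\gamma(t)$ is homogeneous for each $t$. Since $\gamma$ is homogeneous, $G:=\Aut(M,\gamma)$ acts transitively on $M$, and by diffeomorphism equivariance of $q$ the conjugated group $f(t)^{-1}Gf(t)$ is contained in $\Aut(M,\tilde\gamma(t))$; in particular it acts transitively on $M$, so $(M,\tilde\gamma(t))$ is homogeneous. Consequently $\tilde\gamma(t)$ satisfies the criterion in the uniqueness hypothesis of the corollary.

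Now I would apply the uniqueness assumption: the $\Aut(M,\gamma)$-invariant solution $\gamma(t)$ produced by Proposition \ref{exist} is also a homogeneous solution starting at $\gamma$, so it must coincide with $\tilde\gamma(t)$. Thus $\gamma(t)=c(t)f(t)^*\gamma$ is self-similar, and Theorem \ref{sas-hom} yields that $(G/K,\gamma)$ is a semi-algebraic soliton for $G=\Aut(M,\gamma)$, finishing the proof.

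The only mild subtlety — and the step I would be most careful with — is the passage from the abstract soliton identity \eqref{sol-def-DG} to an actual self-similar solution $\tilde\gamma(t)$ defined on a neighborhood of $0$: one needs that the time-dependent vector field obtained by rescaling $X$ is complete enough to integrate to diffeomorphisms of $M$. In the homogeneous setting, however, the vector field $X$ can be taken (after subtracting a Killing-type term in $\Lie G$) to generate a one-parameter group of diffeomorphisms, so the integration is not an obstruction, and the remainder of the argument is essentially bookkeeping around Theorem \ref{sas-hom}.
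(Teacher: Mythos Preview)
Your proof is correct and follows exactly the route the paper intends: the corollary is stated without proof because it is meant to be an immediate consequence of Theorem~\ref{sas-hom}, and your argument---construct the self-similar solution $\tilde\gamma(t)=c(t)f(t)^*\gamma$ from the soliton equation, observe it is homogeneous via the conjugated group $f(t)^{-1}Gf(t)$, invoke the uniqueness hypothesis to identify it with the $\Aut(M,\gamma)$-invariant solution of Proposition~\ref{exist}, then apply Theorem~\ref{sas-hom}---is precisely that deduction spelled out in full.

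One small comment on your final paragraph: the claim that $X$ can be rendered complete ``after subtracting a Killing-type term in $\Lie G$'' is not really justified (subtracting a Killing field need not make an arbitrary vector field complete), but this is the same informality already present in the paper's Section~\ref{intro}, where the equivalence between \eqref{sol-def-DG} and \eqref{sss} is asserted without addressing integrability of $X$; so you are not introducing any gap beyond what the paper itself takes for granted.
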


\section{The moving-bracket approach}\label{mba-sec}

Let $(G/K,\gamma)$ be a homogeneous space endowed with a $G$-invariant geometric structure.  We note that   all the geometry of $(G/K,\gamma)$ is essentially encoded only in the tensor $\gamma\in (T^{r,s}\pg)^K$ and the Lie bracket $\mu$ of $\ggo$.  Therefore, in order to vary homogeneous geometric structures, it is natural to wonder about what if we vary $\mu$ rather than $\gamma$.  To do so, we consider a fixed $(k+n)$-dimensional real vector space $\ggo$ together with a direct sum decomposition
\begin{equation}\label{fixdec}
\ggo=\kg\oplus\pg, \qquad \dim{\kg}=k, \qquad \dim{\pg}=n,
\end{equation}
a fixed suitable tensor $\gamma\in T^{r,s}\pg$ (or a tuple of tensors) and the {\it variety of Lie algebras} $\lca\subset\Lambda^2\ggo^*\otimes\ggo\subset T^{2,1}\ggo$ (i.e.\ the algebraic subset of all Lie brackets on the vector space $\ggo$).  In this way, each $\mu\in\lca$ determines a simply connected almost-effective homogeneous space endowed with an invariant geometric structure with reductive decomposition $\ggo=\kg\oplus\pg$, say $(G_\mu/K_\mu,\gamma)$,  provided that the following conditions hold:

\begin{enumerate}[{\rm (i)}]
\item  $\mu(\kg,\kg)\subset\kg$ and $\mu(\kg,\pg)\subset\pg$.

\item $K_\mu$ is closed in $G_\mu$, where $G_\mu$ denotes the simply connected Lie group with Lie algebra $(\ggo,\mu)$ and $K_\mu$ is the connected Lie subgroup of $G_\mu$ with Lie algebra $\kg$.

\item $\{ Z\in\kg:\mu(Z,\pg)=0\}=0$. 

\item $\theta(\ad_{\mu}{Z}|_{\pg})\gamma=0$ for all $Z\in\kg$.
\end{enumerate}
If $\hca_{k,n}^\gamma$ denotes the subset of $\lca$, consisting of the elements satisfying (i)-(iv), then via the identification  
\begin{equation}\label{hsmu}
\hca_{k,n}^\gamma\ni\mu\longleftrightarrow (G_{\mu}/K_{\mu},\gamma),
\end{equation}
the space $\hca_{k,n}^\gamma$ parametrizes the set of all $n$-dimensional simply connected homogeneous spaces with $k$-dimensional isotropy endowed with an invariant geometric structure of the same type as the fixed $\gamma$ (see \cite{spacehm,BF} for more detailed treatments).    

Recall that any Lie group isomorphism $G\rightarrow G'$ taking $K$ onto $K'$ defines a diffeomorphism $G/K\rightarrow G'/K'$ called an {\it equivariant diffeomorphism}.  In that case, the homogeneous spaces are said to be {\it equivariantly equivalent}.   

The natural actions on tensors provide the following key equivariant equivalence between geometric structures: 
\begin{equation}\label{equiv}
(G_{\overline{h}\cdot\mu}/K_{\overline{h}\cdot\mu},\gamma)  
\longleftarrow (G_\mu/K_\mu,h^*\gamma), \qquad\forall\; \overline{h}=\left[\begin{matrix} h_\kg&0\\ 0&h \end{matrix}\right]\in\Gl(\ggo),
\end{equation}
given by the Lie group isomorphism $G_{\mu}\rightarrow G_{\overline{h}\cdot\mu}$ with derivative $\overline{h}$.  In the case when $\Gamma\subset\Gl(\pg)\cdot\gamma$, it follows from \eqref{hsmu} and \eqref{equiv} that the orbit $(\Gl(\kg)\times\Gl(\pg))\cdot\mu$ contains all the geometric structures of the same type of $\gamma$ (up to equivalence) on the homogeneous space $G_\mu/K_\mu$, for each $\mu\in\hca_{k,n}^\gamma$.  Note that all the homogeneous spaces involved share the same reductive decomposition $\ggo=\kg\oplus\pg$.    

The scaling of tensors can be viewed on the space $\hca_{k,n}^\gamma$ as follows.  Since $(cI)^*\gamma=c^{r-s}\gamma$ for any $c\in\RR^*$  and $\gamma\in T^{r,s}\pg$ (see \eqref{hact}), if we set $\overline{h}:=\left(I,c^{-1}I\right)\in\Gl(\kg)\times\Gl(\pg)$, then we obtain from \eqref{equiv} that 
$$
(G_{c\cdot\mu}/K_{c\cdot\mu},\gamma)\simeq (G_{\mu}/K_{\mu},c^{s-r}\gamma), 
$$
where $c\cdot\mu:=\overline{h}\cdot\mu$ is given by
\begin{equation}\label{scmu}
c\cdot\mu|_{\kg\times\kg}=\mu, \qquad c\cdot\mu|_{\kg\times\pg}=\mu, \qquad c\cdot\mu|_{\pg\times\pg}=c^2\mu_{\kg}+c\mu_{\pg},  
\end{equation}
and the subscripts denote the $\kg$- and $\pg$-components of $\mu|_{\pg\times\pg}$.  

\begin{remark}\label{conv} 
The usual convergence of a sequence of brackets produces convergence of the corresponding geometric structures in well-known senses, like pointed (or Cheeger-Gromov) and smooth up to pull-back by diffeomorphisms, under suitable conditions (see \cite{spacehm,BF}).  Remarkably, a degeneration (i.e.\ $\lambda\in\overline{\Gl(\ggo)\cdot\mu}\setminus\Gl(\ggo)\cdot\mu$) may give rise to the convergence of a sequence of geometric structures on a given homogeneous space (see \eqref{equiv}) toward a structure on a different homogeneous space, which may be topologically quite different.   
\end{remark}

The moving-bracket approach has actually been used for decades in homogeneous geometry (see the overviews given in \cite[Section 5]{BF} and \cite[Section 3]{RNder}).  In most applications, concepts and results from geometric invariant theory, including moment maps and their convexity properties, closed orbits, stability, categorical quotients and Kirwan stratification, have been exploited in one way or another.

\subsection{The bracket flow} 
We assume for the rest of the paper that $\Gl(\pg)\cdot\gamma$ is open in $T\subset T^{r,s}\pg$ as in Section \ref{ope-sec}.  

Motivated by equivalence \eqref{equiv}, a main tool to study the geometric flow \eqref{flow-LG} is a dynamical system defined on the variety of Lie algebras $\lca$ called the {\it bracket flow}, defined by
\begin{equation}\label{BF}
\ddt\mu(t) = \theta\left(\left[\begin{matrix} 0&0\\ 0&Q_{\mu(t)} \end{matrix}\right]\right)\mu(t), \qquad\mu(0)=\mu,
\end{equation}
where $Q_\mu\in\qg_\gamma\subset\glg(\pg)$ is the unique operator such that $\theta(Q_\mu)\gamma = q(G_\mu/K_\mu,\gamma)$ (see \eqref{nondeg2}).  

The bracket flow is equivalent to the geometric flow \eqref{flow-LG} in the following precise sense.  Given $\mu\in\hca_{k,n}^\gamma$, we consider the one-parameter families
$$
(G_\mu/K_\mu,\gamma(t)) \qquad \mbox{and}\qquad \left(G_{\mu(t)}/K_{\mu(t)},\gamma\right),
$$
where $\gamma(t)$ is the solution to the geometric flow \eqref{flow-LG} and $\mu(t)$ is the solution to the bracket flow \eqref{BF}.  Recall that $\ggo=\kg\oplus\pg$ is a reductive decomposition for each of the homogeneous spaces involved.

\begin{theorem}\label{BF-thm}\cite[Theorem 5]{BF}
There exist equivariant diffeomorphisms 
$$
\tilde{h}(t):G_\mu/K_\mu\longrightarrow G_{\mu(t)}/K_{\mu(t)} \qquad\mbox{such that}\qquad
\gamma(t)=\tilde{h}(t)^*\gamma, \qquad\forall t.
$$
Moreover, each $\tilde{h}(t)$ can be chosen to be the Lie group isomorphism $G_\mu\longrightarrow G_{\mu(t)}$ with derivative 
$$
\overline{h}(t):=\left[\begin{matrix} I&0\\ 0&h(t) \end{matrix}\right]:\ggo\longrightarrow\ggo, 
$$ 
where $h(t)=d\tilde{h}(t)|_o:\pg\longrightarrow\pg$ is the solution to any of the following ODE's:
\begin{enumerate}[{\rm (i)}]
\item $\ddt h(t)=-h(t)Q_{\gamma(t)}$, $h(0)=I$, where $Q_{\gamma(t)}\in\qg_{\gamma(t)}\subset\glg(\pg)$ is defined by
$$
\theta(Q_{\gamma(t)})\gamma(t)=q(G/K,\gamma(t)).
$$
\item $\ddt h(t)=-Q_{\mu(t)} h(t)$, $h(0)=I$, where $Q_{\mu(t)}\in\qg_{\gamma}\subset\glg(\pg)$ is defined by
$$
\theta(Q_{\mu(t)})\gamma=q(G_{\mu(t)}/K_{\mu(t)},\gamma).
$$
\end{enumerate}
\end{theorem}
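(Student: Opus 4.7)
My plan is to construct the family $\{\tilde h(t)\}$ by integrating one of the ODEs (i)--(ii) in the statement and then verifying that the resulting equivariant diffeomorphisms intertwine the geometric and bracket flows. Both the geometric flow \eqref{flow-LG} and the bracket flow \eqref{BF} are ODEs on finite-dimensional spaces, so each admits a unique smooth $G$-invariant solution (respectively $\gamma(t)$ on $G_\mu/K_\mu$ and $\mu(t)\in\hca_{k,n}^\gamma$) on some maximal common interval around $t=0$. The strategy is to produce one curve $h(t)\in\Gl(\pg)$ satisfying (i) and (ii), lift it to $\bar h(t)\in\Gl(\ggo)$ with $\bar h(t)|_\kg=I$ and $\bar h(t)|_\pg=h(t)$, show that $\bar h(t):(\ggo,\mu)\to(\ggo,\mu(t))$ is a Lie algebra isomorphism preserving the splitting, integrate via the simply connected hypothesis on $G_\mu$ to $\tilde f(t):G_\mu\to G_{\mu(t)}$, and descend to the claimed $\tilde h(t):G_\mu/K_\mu\to G_{\mu(t)}/K_{\mu(t)}$. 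That $K_\mu$ maps onto $K_{\mu(t)}$ is automatic from $\bar h(t)\kg=\kg$, and $d\tilde h(t)|_o=h(t)$ by construction.

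Concretely I would begin on the geometric-flow side. Define $h(t)$ as the solution of ODE (i), $\ddt h(t)=-h(t)Q_{\gamma(t)}$, $h(0)=I$, a well-posed time-dependent linear equation on $\Gl(\pg)$. Setting $\nu(t):=\bar h(t)\cdot\mu$ and differentiating via the action-derivative identity $\ddt(\bar h\cdot\mu)=\theta(\bar h'\bar h^{-1})(\bar h\cdot\mu)$ gives
$$
\ddt\nu(t)=\theta\!\left(\left[\begin{matrix}0&0\\0&-h(t)Q_{\gamma(t)}h(t)^{-1}\end{matrix}\right]\right)\nu(t).
$$
The diffeomorphism equivariance of $q$ along $\tilde h(t):(G_\mu/K_\mu,\gamma(t))\to(G_{\nu(t)}/K_{\nu(t)},\gamma)$ yields the key operator identity $h(t)Q_{\gamma(t)}h(t)^{-1}\equiv Q_{\nu(t)}\pmod{\glg(\pg)_\gamma}$; combined with condition (iv) this shows $\nu(t)$ solves the bracket flow in the sense of \eqref{BF}, so $\nu(t)=\mu(t)$ by uniqueness. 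On the geometric side, $(\tilde h(t)^*\gamma)_o=h(t)^*\gamma$, and substituting ODE (i) into the derivative of $h(t)^*\gamma=h(t)^{-1}\cdot\gamma$ gives $\ddt(h(t)^*\gamma)=\theta(Q_{\gamma(t)})(h(t)^*\gamma)=q(h(t)^*\gamma)$; uniqueness of $G_\mu$-invariant solutions on $G_\mu/K_\mu$ then forces $\tilde h(t)^*\gamma=\gamma(t)$. ODE (ii) follows from (i) by rewriting $-h(t)Q_{\gamma(t)}=-Q_{\mu(t)}h(t)$ modulo a stabilizer factor that is invisible at the level of $h(t)^*\gamma$ and $\tilde h(t)$.

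The main obstacle is the stabilizer bookkeeping. The operators $Q_{\gamma(t)}\in\qg_{\gamma(t)}$ and $Q_{\mu(t)}\in\qg_\gamma$ are representatives of the preferred direction picked out of complementary subspaces to conjugate but distinct stabilizer subalgebras, and a ``modulo stabilizer'' correction that is inert when acting on $\gamma$ or $\gamma(t)$ can be nonzero when acting on the Lie bracket $\mu$ in the bracket flow. The delicate step is to check that the correction produced by the conjugation, $h(t)Q_{\gamma(t)}h(t)^{-1}-Q_{\mu(t)}\in\glg(\pg)_\gamma$, actually lies in $\ad_\mu\kg|_\pg$, so that the Jacobi identity together with compatibility condition (iv) from Section \ref{mba-sec} makes its $\theta$-action on $\mu$ vanish. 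This is where the rigid block form $\bar h(t)=\left[\begin{smallmatrix}I&0\\0&h(t)\end{smallmatrix}\right]$ and the connectedness of $K_\mu$ really enter: they guarantee that the only stabilizer ambiguity one has to fight off is exactly the one killed by (iv).
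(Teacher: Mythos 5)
The paper itself does not prove this theorem: it is quoted from \cite[Theorem 5]{BF}, so there is no in-paper proof to compare against. Your overall architecture is nevertheless the standard one from that reference: define $h(t)$ by one of the ODEs, transport the bracket by $\overline{h}(t)$ and the tensor by $h(t)^{-1}$, identify the results with $\mu(t)$ and $\gamma(t)$ by ODE uniqueness on finite-dimensional spaces, and integrate to the group level via simple connectedness of $G_\mu$. However, there is a genuine gap at exactly the step you flag as delicate, and the repair you propose does not work.

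You only derive $h(t)Q_{\gamma(t)}h(t)^{-1}\equiv Q_{\nu(t)}\pmod{\glg(\pg)_\gamma}$, and then need the correction $C(t):=h(t)Q_{\gamma(t)}h(t)^{-1}-Q_{\nu(t)}\in\glg(\pg)_\gamma$ to act trivially on the bracket, i.e.\ $\theta\bigl(\left[\begin{smallmatrix} 0&0\\ 0&C(t)\end{smallmatrix}\right]\bigr)\nu(t)=0$. Your claim that $C(t)$ lands in $\ad_\mu\kg|_\pg$ is unsupported --- nothing forces the discrepancy between two representatives of the same tensor $q$ to lie in the isotropy subalgebra, which is typically a small piece of $\glg(\pg)_\gamma$ (e.g.\ of $\sog(\pg)$ in the metric case) --- and even if it did, compatibility condition (iv) of Section \ref{mba-sec} kills $\theta(\ad_\mu Z|_\pg)\gamma$, not $\theta\bigl(\left[\begin{smallmatrix} 0&0\\ 0&\ad_\mu Z|_\pg\end{smallmatrix}\right]\bigr)\mu$; the latter vanishes only if the block-diagonal truncation of $\ad_\mu Z$ is a derivation of $\mu$, which fails in general since $\left[\begin{smallmatrix} \ad_\mu Z|_\kg&0\\ 0&0\end{smallmatrix}\right]$ is not one. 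The correct resolution is that $C(t)=0$ identically: since $h(t)\cdot\gamma(t)=\gamma$, conjugation by $h(t)$ carries $\glg(\pg)_{\gamma(t)}$ onto $\glg(\pg)_\gamma$ and $\qg_{\gamma(t)}$ onto $\qg_\gamma$ (the complements $\qg_{\gamma'}$ are chosen $\Gl(\pg)$-equivariantly along the orbit, as in every example of Section \ref{ope-sec}), so $h(t)Q_{\gamma(t)}h(t)^{-1}$ lies in $\qg_\gamma$ and satisfies $\theta(h(t)Q_{\gamma(t)}h(t)^{-1})\gamma=h(t)\cdot q(G_\mu/K_\mu,\gamma(t))=q(G_{\nu(t)}/K_{\nu(t)},\gamma)$; by the uniqueness of the representative in $\qg_\gamma$ (see \eqref{nondeg2}) it therefore equals $Q_{\nu(t)}$ exactly. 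This exact identity is also what makes (i) and (ii) define the same curve $h(t)$ and removes all ``modulo stabilizer'' bookkeeping. A smaller slip of the same nature: $\ddt(h(t)^*\gamma)=\theta(Q_{\gamma(t)})(h(t)^*\gamma)$ shows that $h(t)^*\gamma$ and $\gamma(t)$ solve the same linear non-autonomous ODE with coefficient $Q_{\gamma(t)}$ determined by the already-known solution, which is how uniqueness should be invoked; rewriting the right-hand side as $q(h(t)^*\gamma)$ presupposes the conclusion.
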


\begin{remark}
The following conditions also hold: 
\begin{enumerate}
\item[(iii)] $\gamma(t)=h(t)^*\gamma$ for all $t$. 

\item[(iv)] $\mu(t)=\overline{h}(t)\cdot\mu$ for all $t$. 
\end{enumerate}
\end{remark}

The bracket flow is useful to better visualize the possible limits of solutions, under diverse rescalings, as well as to address regularity issues.  Since the flows differ only by pullback by time-dependent diffeomorphisms, the behavior of any geometric quantity can be studied along the bracket flow solutions.  We now list some of its properties and applications.  

\begin{enumerate}[{\small $\bullet$}] 
\item The flows \eqref{flow-LG} and \eqref{BF} are also equivalent in the following strong sense: each one can be obtained from the other by solving the corresponding ODE in part (i) or (ii) and applying parts (iv) or (iii), accordingly.  In particular, the maximal interval of time $(T_-,T_+)$ where a solution exists is exactly the same for both flows.

\item The velocity of the flow $q(\gamma(t))$ must blow up at any finite-time singularity.  More precisely, if $r>s$ and $T_+<\infty$, then
$$
|q(\gamma(t))|_t\geq\frac{C}{T_+-t}, \qquad |\mu(t)|\geq\frac{C}{(T_+-t)^{\frac{1}{(r-s)(1-\alpha)}}},\qquad \forall t\in[0,T_+),
$$
for some $C>0$ depending only on the dimension $k+n$ and the geometric flow (see \cite[Corollary 5]{BF} and \cite[Proposition 5]{BF}, respectively).

\item $\mu(t)$ may converge to a geometric structure on a different homogeneous space, as explained in Remark \ref{conv} (see \cite[Corollary 4]{BF}).  This occurs already in dimension $3$ for the Ricci flow (see \cite{homRF}).  

\item Many geometric flows have been studied in the homogeneous case using the bracket flow, including the Ricci flow \cite{Gzh, Pyn, GlcPyn, nilricciflow, homRF, Arr, Lfn, homRS}, the Chern-Ricci flow \cite{SCF, CRF}, the pluriclosed flow \cite{EnrFinVzz, ArrLfn}, the hermitian curvature flow \cite{LfnPujVzz}, the symplectic curvature flow \cite{SCFmuA} and the $G_2$-Laplacian flow \cite{FrnFinMnr, LF, BggFin, MrnSrp}.  

\item The bracket flow was recently applied by B\"ohm-Lafuente in \cite{BhmLfn,BhmLfn3} to prove the convergence of any immortal homogeneous Ricci flow solution to a homogeneous expanding Ricci soliton.  
\end{enumerate}

\subsection{Bracket flow evolution of solitons}  
In the light of the equivalence between the flows \eqref{flow-LG} and \eqref{BF}, a natural question arises: how do solitons evolve according to the bracket flow?  It is natural to expect an evolution of a very special kind.  

If $\mu$ is a fixed point (up to scaling) of the bracket flow \eqref{BF}, i.e.\ $\mu(t)=c(t)\cdot\mu$ for some $c(t)\in\RR^*$ (see \eqref{scmu}), then by just evaluating the equation at $t=0$ one obtains 
\begin{equation}\label{algsol}
Q_\mu=cI+D_\pg, \qquad \mbox{for some}\quad c\in\RR, \quad D=\left[\begin{matrix} 0&0\\ 0&D_\pg\end{matrix}\right]\in\Der(\ggo/\kg).
\end{equation} 
This implies that $(G_\mu/K_\mu,\gamma)$ is an algebraic soliton (see \eqref{sas-Q} and \eqref{as-Q}) satisfying \eqref{D00}.  The converse assertion also holds, see Corollary \ref{sasBF-cor} below.     

We now describe the bracket flow evolution of any semi-algebraic soliton.  

\begin{proposition}\label{sasBF}
Let $(G_\mu/K_\mu,\gamma)$ be a semi-algebraic soliton as in \eqref{semi-alg-sol}, say $q(\gamma)=c\gamma+\theta(D_\pg)$, and assume that \eqref{D00} holds.  Then the bracket flow solution starting at $\mu$ is given by  
\begin{equation}\label{saBF2}
\mu(t) = c(t)^{\frac{1}{s-r}} \cdot \left(\left[\begin{matrix} I&0\\ 0& e^{s(t)A} \end{matrix}\right] \cdot \mu\right), \qquad A := \proy_{\glg(\pg)_\gamma}(D_\pg),
\end{equation}
where $c(t)$ and $s(t)$ are as in \eqref{sas-evg} and $\proy_{\glg(\pg)_\gamma}:\glg(\pg)\rightarrow  \glg(\pg)_\gamma$ is the projection relative to the decomposition $\glg(\pg)=\glg(\pg)_\gamma\oplus\qg_\gamma$ given in \eqref{dec-g}.
\end{proposition}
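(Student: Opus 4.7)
\medskip

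\noindent\textbf{Proof proposal.} The plan is to invoke Theorem~\ref{BF-thm}, explicitly solve the ODE governing $h(t)$, and then exploit the derivation property of $D$ to absorb the ``extra'' factor into the automorphism group of $\mu$.

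First, I would decompose $D_\pg = A + B$ with $A = \proy_{\glg(\pg)_\gamma}(D_\pg) \in \glg(\pg)_\gamma$ and $B \in \qg_\gamma$. Since $\theta(A)\gamma = 0$, the semi-algebraic soliton equation becomes $Q_\gamma = \tfrac{c}{s-r} I + B$. By \eqref{sas-evg} the geometric flow starting at $\gamma$ is $\gamma(t) = c(t) e^{s(t)D_\pg}\cdot\gamma = \bigl(c(t)^{1/(s-r)} e^{s(t)D_\pg}\bigr)\cdot\gamma$. By Theorem~\ref{BF-thm}, the bracket flow solution satisfies $\mu(t)=\overline{h}(t)\cdot\mu$ where $h(t)\in\Gl(\pg)$ is characterized by $\ddt h(t)=-h(t)Q_{\gamma(t)}$, $h(0)=I$, together with $h(t)^*\gamma = \gamma(t)$.

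Next I would compute $Q_{\gamma(t)}$. Since $\gamma(t)$ and $\gamma$ lie in the same $\Gl(\pg)$-orbit up to scaling, conjugation by $e^{s(t)D_\pg}$ intertwines the two stabilizer decompositions: $\qg_{\gamma(t)} = e^{s(t)D_\pg}\qg_\gamma e^{-s(t)D_\pg}$ and likewise for $\glg(\pg)_{\gamma(t)}$. Combined with the fact that $D_\pg$ commutes with $e^{s(t)D_\pg}$, this yields $\proy_{\gamma(t)}(D_\pg) = e^{s(t)D_\pg} B e^{-s(t)D_\pg}$, and applying \eqref{sas-Q} at time $t$ together with the scaling law \eqref{alfa} gives
\[
Q_{\gamma(t)} = c(t)^{\alpha-1}\left(\tfrac{c}{s-r} I + e^{s(t)D_\pg} B e^{-s(t)D_\pg}\right).
\]
Then I would try the ansatz $h(t)^{-1} = c(t)^{1/(s-r)} e^{s(t)D_\pg} k(t)$ with $k(t)\in\Gl(\pg)_\gamma$ and $k(0)=I$; this automatically gives $h(t)^{-1}\cdot\gamma = \gamma(t)$. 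Plugging into $\ddt h(t)^{-1}=Q_{\gamma(t)}h(t)^{-1}$, using $c'(t)=c\,c(t)^\alpha$, $s'(t)=c(t)^{\alpha-1}$, and $e^{-s(t)D_\pg}(A+B)e^{s(t)D_\pg}=A+B$, all terms involving $B$ and the scalar drop out and I obtain $k'(t)=-s'(t)\,A\,k(t)$, whence $k(t)=e^{-s(t)A}$. Thus
\[
h(t) = c(t)^{-1/(s-r)} e^{s(t)A} e^{-s(t)D_\pg}.
\]

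Finally, I would peel off the $e^{-s(t)D_\pg}$ factor using the derivation property. Because \eqref{D00} holds, $D \in \Der(\ggo)$ satisfies $D|_\kg = 0$, so $e^{-s(t)D} = \left[\begin{smallmatrix} I & 0 \\ 0 & e^{-s(t)D_\pg}\end{smallmatrix}\right] \in \Aut(\ggo)$ and consequently $e^{-s(t)D}\cdot\mu = \mu$. Factoring $\overline{h}(t) = \overline{k}(t)\cdot e^{-s(t)D}$ with $\overline{k}(t) = \left[\begin{smallmatrix} I & 0 \\ 0 & c(t)^{-1/(s-r)} e^{s(t)A}\end{smallmatrix}\right]$, and noting that via \eqref{scmu} the matrix $\overline{k}(t)$ corresponds exactly to the composition of the scaling $c(t)^{1/(s-r)}\cdot$ with $\overline{h}_2(t)\cdot$, the conclusion \eqref{saBF2} follows.

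The main technical obstacle I expect is Step~3: correctly identifying $Q_{\gamma(t)}$ requires verifying that the splitting $\glg(\pg) = \glg(\pg)_\gamma \oplus \qg_\gamma$ transports cleanly under $e^{s(t)D_\pg}$ and that the commutativity $[D_\pg, e^{s(t)D_\pg}]=0$ produces the desired cancellation in the ODE for $k(t)$. Once this algebraic bookkeeping is arranged, the derivation identity $e^{-s(t)D}\cdot\mu=\mu$ is the key geometric input that removes the $\qg_\gamma$-component of $D_\pg$ from the final expression, leaving only the stabilizer part $A$.
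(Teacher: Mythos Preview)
Your proposal is correct and follows essentially the same route as the paper: compute $Q_{\gamma(t)}$ by transporting the decomposition \eqref{dec-g} under $e^{s(t)D_\pg}$ to obtain $Q_{\gamma(t)}=c(t)^{\alpha-1}e^{s(t)D_\pg}Q_\gamma e^{-s(t)D_\pg}$, solve the ODE in Theorem~\ref{BF-thm}(i) to get $h(t)=c(t)^{1/(r-s)}e^{s(t)A}e^{-s(t)D_\pg}$, and then use \eqref{D00} to absorb the $e^{-s(t)D_\pg}$ factor into $\Aut(\ggo,\mu)$. Your write-up is in fact more explicit than the paper's, which simply asserts that it is ``straightforward to check'' that the given $h(t)$ solves the ODE; your ansatz for $h(t)^{-1}$ and the resulting equation $k'(t)=-s'(t)Ak(t)$ supply exactly that verification.
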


\begin{proof}
It follows from \eqref{sas-evg} that $\gamma(t)=c(t) e^{s(t)D_\pg}\cdot\gamma$ and thus
\begin{align*}
q(\gamma(t)) =&  c(t)^\alpha e^{s(t)D_\pg}\cdot q(\gamma) = cc(t)^\alpha e^{s(t)D_\pg}\cdot\gamma + c(t)^\alpha \theta(D_\pg)e^{s(t)D_\pg}\cdot\gamma \\ 
=& cc(t)^{\alpha-1}\gamma(t)+c(t)^{\alpha-1} \theta(D_\pg)\gamma(t). 
\end{align*}
This implies that 
\begin{align}
Q_{\gamma(t)} =& \tfrac{cc(t)^{\alpha-1}}{s-r}I + c(t)^{\alpha-1}\proy_{\gamma(t)}(D_\pg) \notag \\
=& \tfrac{cc(t)^{\alpha-1}}{s-r}I + c(t)^{\alpha-1}e^{s(t)D_\pg}\proy_{\gamma}(D_\pg)e^{-s(t)D_\pg} \label{Qgt}\\
=& e^{s(t)D_\pg}\left(\tfrac{cc(t)^{\alpha-1}}{s-r}I + c(t)^{\alpha-1}(D_\pg-A)\right)e^{-s(t)D_\pg} \notag\\ 
=& c(t)^{\alpha-1}e^{s(t)D_\pg}Q_\gamma e^{-s(t)D_\pg}, \notag
\end{align}
where $A:=\proy_{\glg(\pg)_\gamma}(D_\pg)$.  According to Theorem \ref{BF-thm}, one can compute the bracket flow solution $\mu(t)$ by solving the differential equation given in its part (i).  It is straightforward to check that the solution is given by 
$$
h(t):= c(t)^{\frac{1}{r-s}}e^{s(t)A}e^{-s(t)D_\pg},
$$
where $c(t)$ and $s(t)$ are as in \eqref{sas-evg}, and therefore formula \eqref{saBF2} follows from \eqref{D00}, concluding the proof.  
\end{proof}

\begin{corollary}\label{sasBF-cor}
$(G_\mu/K_\mu,\gamma)$ is an algebraic soliton satisfying \eqref{D00} if and only if $\mu$ is a fixed point (up to scaling) of the bracket flow.   
\end{corollary}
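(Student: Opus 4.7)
The plan is to prove the two implications separately. The $(\Leftarrow)$ direction (algebraic soliton satisfying \eqref{D00} implies fixed point up to scaling) is an immediate consequence of Proposition \ref{sasBF}: the algebraic condition \eqref{as-Q} says $D_\pg\in\qg_\gamma$, equivalently $A:=\proy_{\glg(\pg)_\gamma}(D_\pg)=0$ in the notation of that proposition; substituting $A=0$ into formula \eqref{saBF2} collapses the exponential factor to the identity, so $\mu(t)=c(t)^{1/(s-r)}\cdot\mu$ is a pure scaling of $\mu$.

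For the $(\Rightarrow)$ direction, suppose $\mu(t)=c(t)\cdot\mu$ with $c(0)=1$. The scaling action \eqref{scmu} is realized via the one-parameter subgroup $\overline{h}(c)=\exp(\log c\cdot \tilde{E})$ of $\Gl(\ggo)$, where $\tilde{E}:=\left[\begin{matrix} 0&0\\ 0&-I\end{matrix}\right]$, so the standard identity $\ddt|_{t=0}(e^{tB}\cdot\mu)=\theta(B)\mu$ yields $\ddt\mu(t)|_{t=0}=c'(0)\,\theta(\tilde{E})\mu$. Matching this against the bracket flow \eqref{BF} at $t=0$, namely $\ddt\mu(t)|_{t=0}=\theta\left(\left[\begin{matrix} 0&0\\ 0&Q_\mu\end{matrix}\right]\right)\mu$, one obtains $\theta(B)\mu=0$ with $B:=\left[\begin{matrix} 0&0\\ 0&Q_\mu+c'(0)I\end{matrix}\right]$. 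Since $\mu$ is a Lie bracket, the condition $\theta(B)\mu=0$ (read off \eqref{Aact}) is precisely that $B\in\Der(\ggo,\mu)$. Setting $c:=-c'(0)$ and $D_\pg:=Q_\mu-cI$, we get $D:=\left[\begin{matrix} 0&0\\ 0&D_\pg\end{matrix}\right]\in\Der(\ggo/\kg)$ and $Q_\mu=cI+D_\pg$, which is \eqref{algsol} (i.e., the semi-algebraic condition with \eqref{D00}). To upgrade to algebraic, I project both sides of $Q_\mu=cI+D_\pg$ onto $\qg_\gamma$: since $Q_\mu\in\qg_\gamma$ by definition, we deduce $D_\pg=\proy_\gamma(D_\pg)$ as soon as $I\in\qg_\gamma$, which holds in every geometric setting of Section \ref{ope-sec}.

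The main bookkeeping step is the equivalence $\theta(B)\mu=0\Leftrightarrow B\in\Der(\ggo,\mu)$, which is immediate from \eqref{Aact} but deserves a careful sign check. The only ingredient beyond the stated hypotheses is $I\in\qg_\gamma$, used in the final projection step; since this is automatic in all the relevant examples (Riemannian, pseudo-Riemannian, almost-symplectic, almost-hermitian, $G_2$) enumerated in Section \ref{ope-sec}, no real obstacle is anticipated.
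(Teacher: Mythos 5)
Your proof is correct and follows essentially the same route as the paper: the implication from algebraic soliton to fixed point is read off from Proposition \ref{sasBF} with $A=0$, and the converse is obtained by evaluating the bracket flow equation at $t=0$, which is exactly how the paper arrives at \eqref{algsol}. Your explicit differentiation of the scaling action \eqref{scmu} and your remark that $I\in\qg_\gamma$ is what lets one conclude $D_\pg=Q_\mu-cI\in\qg_\gamma$ merely spell out steps the paper leaves implicit (note only that your $(\Leftarrow)$/$(\Rightarrow)$ labels are swapped relative to the statement as written).
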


In particular, algebraic solitons are precisely the possible limits of any normalized bracket flow (cf.\ Remark \ref{conv}).  Furthermore, given a starting point, one can obtain at most one non-flat algebraic soliton as a limit by running all possible normalized bracket flow solutions.  

We note that for expanding (resp.\ shrinking) solitons, the function $s$ is strictly increasing and $s(t)\to\infty$ as $t\to\infty$ (resp.\ $s(t)\to-\infty$ as $t\to-\infty$).  Recall also that $s(t)=t$ for steady solitons.  

In the case when $A$ is skew-symmetric (e.g.\ if a metric is attached to $\gamma$, since in that case $\glg(\pg)_\gamma\subset\sog(\pg)$), its eigenvalues are either purely imaginary numbers or zero, say $\pm\im a_1,\dots,\pm\im a_m,0,\dots,0$ ($a_j>0$).  Thus exactly one of the following two behaviors occur:

\begin{enumerate}[{\rm (i)}]
\item If the set $\{ a_1,\dots,a_m\}$ is linearly dependent over $\QQ$, then there exists a sequence $t_k$, with $t_k \rightarrow \pm\infty$ (depending on the type of soliton), such that $e^{s(t_k)A}=I$ for all $k$, and thus the bracket flow solution projected on the sphere is periodic.  

\item If on the contrary, the set $\{a_1,\dots,a_m\}$ is linearly independent over $\QQ$, then by Kronecker's theorem, for each $t_0\in(T_-,T_+)$, there exists a sequence $t_k$, with $t_k \rightarrow \pm\infty$, such that $e^{s(t_k)A} \rightarrow e^{s(t_0)A}$.  This implies that the solution projected on the sphere is not periodic but develop the following chaotic behavior: for each $t_0$ there exists a sequence $t_k\rightarrow\infty$ such that $\mu(t_k)/|\mu(t_k)|$ converges to $\mu(t_0)/|\mu(t_0)|$, that is, each point of the solution on the sphere is contained in the $\omega$-limit.  
\end{enumerate}

There are examples of $G_2$-Laplacian (see \cite{Ncl,LF}) and pluriclosed (see \cite{ArrLfn}) semi-algebraic solitons satisfying (i).  However, the existence of a semi-algebraic soliton as in part (ii) is still an open problem.

\section{Simultaneous diagonalization of a homogeneous geometric flow}\label{fd-sec}

In this section, we aim to characterize algebraic solitons among homogeneous solitons in a geometric way.  It will be assumed here that $\gamma$ comes with a Riemannian metric $g_\gamma$ (see Remark \ref{metg}).  

\begin{definition}\label{diag}
A homogeneous geometric structure $(M,\gamma)$ is said to be {\it flow diagonal} if the $\Aut(M,\gamma)$-invariant solution $\gamma(t)$ starting at $\gamma$ (see Proposition \ref{exist}) satisfies the following property: at some point $p\in M$, there exists an orthonormal basis $\beta$ of $T_pM$ such that the matrix $[Q_{\gamma(t)}]_\beta$ is diagonal for all $t$.
\end{definition}

Note that the point $p$ can be replaced by any other point of $M$ by homogeneity.  The property of being flow diagonal is a geometric invariant since given $f\in\Diff(M)$, the operators corresponding to $(M,f^*\gamma)$ are simultaneously conjugate, via $df|_{f^{-1}(p)}$, to those of $(M,\gamma)$.

If $(M,\gamma)$ is flow diagonal, then the velocity of the geometric flow $q(\gamma(t))=\theta(Q_{\gamma(t)})\gamma(t)$ takes a very simple form in terms of the fixed basis $\beta$, making the study of any aspect of the solutions to the ODE \eqref{flow-LG} much more workable, including its qualitative behavior and the search for exact solutions.  Indeed, once we consider a presentation $(M,\gamma)=(G/K,\gamma)$ and a reductive decomposition $\ggo=\kg\oplus\pg$, recall from Theorem \ref{BF-thm} that $\gamma(t)=h(t)^*\gamma$ for the solution $h(t)\in\Gl(\pg)$ to the ODE $\ddt h(t)=-h(t)Q_{\gamma(t)}$, $h(0)=I$.  It follows that the following conditions are equivalent:
\begin{enumerate}[{\rm (i)}]
\item $(G/K,\gamma)$ is flow diagonal.

\item The family of operators $\{ h(t):t\in (T_-,T_+)\}$ is simultaneously diagonalizable with respect to an orthonormal basis of $\pg$.
\end{enumerate}

For instance, the flow diagonal condition was assumed in the study of the Ricci flow on $4$-dimensional homogeneous manifolds in \cite{IsnJckLu,Ltt} and it was shown to hold in most of the exact Laplacian flow solutions on nilpotent Lie groups given in \cite[Section 4]{FrnFinMnr}.

\begin{example}
Let $(G/K,\gamma)$ be a simply connected semi-algebraic soliton such that \eqref{D00} holds.  It follows from \eqref{Qgt} that 
\begin{equation}\label{sas-Qt}
Q_{\gamma(t)} = c(t)^{\alpha-1}e^{s(t)D_\pg} Q_{\gamma} e^{-s(t)D_\pg}, \qquad\forall t.
\end{equation}
In particular, if $(G/K,\gamma)$ is actually an algebraic soliton, then $[Q_\gamma,D_\pg]=0$ and so $Q_{\gamma(t)}=c(t)^{\alpha-1}Q_\gamma$.  Thus $(G/K,\gamma)$ is flow diagonal as soon as only $Q_\gamma$ is diagonalizable with respect to an orthonormal basis, i.e.\ $Q_\gamma$ symmetric.  
\end{example}

We now show that the flow diagonal condition characterizes algebraic solitons among homogeneous solitons.  Recall that according to Theorem \ref{sas-hom}, any homogeneous soliton can be presented as a semi-algebraic soliton.  

\begin{theorem}\label{diagalg}
A semi-algebraic soliton $(G/K,\gamma)$ with $K$ compact and $Q_\gamma$ symmetric is flow diagonal if and only if it is an algebraic soliton.
\end{theorem}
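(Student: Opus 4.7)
The plan is to prove each implication using the formula
\begin{equation*}
Q_{\gamma(t)} = c(t)^{\alpha-1}e^{s(t)D_\pg}\,Q_\gamma\, e^{-s(t)D_\pg}
\end{equation*}
from \eqref{sas-Qt}, together with the equivalence displayed after Definition \ref{diag} between flow diagonality and simultaneous diagonalizability of $\{h(t)\}\subset\Gl(\pg)$ in an orthonormal basis of $\pg$.  Since $K$ is compact I pick the reductive decomposition with $B(\kg,\pg)=0$ so that \eqref{D00} applies, and decompose $D_\pg = A + P$ with $A=\proy_{\glg(\pg)_\gamma}(D_\pg)$ skew-symmetric (as $\glg(\pg)_\gamma\subset\sog(\pg)$ when the metric $g_\gamma$ is part of $\gamma$) and $P=\proy_\gamma(D_\pg)\in\qg_\gamma$ symmetric; note also that $Q_\gamma = \tfrac{c}{s-r}I+P$, so $[P,Q_\gamma]=0$.

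The forward direction is essentially the Example preceding the theorem: when $(G/K,\gamma)$ is algebraic we may take $A=0$, so $[D_\pg,Q_\gamma]=0$ and the conjugation in \eqref{sas-Qt} collapses to $Q_{\gamma(t)}=c(t)^{\alpha-1}Q_\gamma$.  Symmetry of $Q_\gamma$ produces a $g_\gamma$-orthonormal basis $\beta$ of $\pg$ diagonalizing $Q_\gamma$, and then every $Q_{\gamma(t)}$ is diagonal in $\beta$.

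For the converse I would differentiate \eqref{sas-Qt} at $t=0$ (using $c(0)=1$, $c'(0)=c$, $s'(0)=1$) to obtain
\begin{equation*}
\tfrac{d}{dt}\Big|_{t=0} Q_{\gamma(t)} = (\alpha-1)c\,Q_\gamma + [A,Q_\gamma].
\end{equation*}
Flow diagonality forces the left-hand side to be diagonal in the fixed orthonormal basis $\beta$, and since $Q_\gamma=\mathrm{diag}(\lambda_1,\dots,\lambda_n)$ in $\beta$, the identity $[A,Q_\gamma]_{ij}=(\lambda_j-\lambda_i)A_{ij}$ forces $A_{ij}=0$ whenever $\lambda_i\neq\lambda_j$.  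Hence $A$ is block-diagonal with respect to the eigenspace decomposition of $Q_\gamma$, so $[A,Q_\gamma]=0$ and equivalently $[A,P]=0$.

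The remaining, and most delicate, step is to upgrade $[A,P]=0$ to genuine algebraicity: to exhibit a derivation $\tilde D\in\Der(\ggo/\kg)$ with $\tilde D_\pg\in\qg_\gamma$.  Since $\theta(A)\gamma=0$, this amounts to producing $Z\in\kg$ with $\ad Z|_\pg = A$, after which $\tilde D := D-\ad Z$ lies in $\Der(\ggo/\kg)$ and has $\tilde D_\pg = P\in\qg_\gamma$.  The hypotheses (compact $K$ and attached metric) place both $A$ and $\ad\kg|_\pg$ inside $\sog(\pg)\cap\glg(\pg)_\gamma\cap C(Q_\gamma)$, the last inclusion because $Q_\gamma$ is $\Ad(K)$-equivariant, and I would conclude via compact-group representation theory that the skew-symmetric centralizer part of $\glg(\pg)_\gamma$ is realized by $\ad\kg|_\pg$; alternatively, one can translate to the bracket-flow picture, where \eqref{saBF2} shows that flow diagonality forces $e^{s(t)A}\cdot\mu$ to evolve only by equivariant equivalences, so Corollary \ref{sasBF-cor} identifies $\mu$ as a fixed point of the bracket flow up to scaling — the algebraic soliton condition.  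The main obstacle is precisely this Lie-theoretic absorption step, where the interplay between compactness of $K$, the $\Aut(M,\gamma)$-presentation, and the metric structure must be used to rule out a skew-symmetric defect $A$ living strictly outside $\ad\kg|_\pg$.
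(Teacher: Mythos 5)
Your forward direction and the first half of the converse are fine. In particular, your derivation of $[A,P]=0$ from flow diagonality (differentiating \eqref{sas-Qt} once at $t=0$ and reading off the entries of $[A,Q_\gamma]$ in the diagonalizing basis) is a valid, slightly more direct variant of what the paper does: the paper instead observes that all the $Q_{\gamma(t)}$ commute, deduces $[[D_\pg,Q_\gamma],Q_\gamma]=0$, and then uses the trace identity $\tr D_\pg[[D_\pg,Q_\gamma],Q_\gamma]=-\tr[A,S]^2$ together with the symmetry of $[A,S]$ to conclude $[A,S]=0$ (the paper's $S$ is your $P$). Up to this point the two arguments are equivalent.

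The genuine gap is the last step, and neither of the two routes you sketch closes it. First, the algebraic soliton condition \eqref{sas-Q}--\eqref{as-Q} does \emph{not} ``amount to producing $Z\in\kg$ with $\ad Z|_\pg=A$'': it only asks for \emph{some} derivation $\tilde D\in\Der(\ggo/\kg)$ with $\tilde D_\pg=\proy_\gamma(\tilde D_\pg)$, and the absorption of $A$ into $\ad\kg|_\pg$ is sufficient but far from necessary. Indeed your representation-theoretic claim is false in general: take $K$ trivial (a left-invariant structure on a Lie group), so $\ad\kg|_\pg=0$, while $\glg(\pg)_\gamma\cap\sog(\pg)\cap C(Q_\gamma)$ is typically nonzero; yet the theorem still holds there. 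Second, the bracket-flow alternative begs the question: \eqref{saBF2} holds for \emph{every} semi-algebraic soliton satisfying \eqref{D00}, and $\mu$ is a fixed point up to scaling precisely when $\left[\begin{smallmatrix}0&0\\0&A\end{smallmatrix}\right]\in\Der(\ggo,\mu)$ --- which is exactly what needs to be proved, not a consequence of the solution staying inside an equivalence class (it always does). The paper's actual closing move is Lie-theoretic but of a different nature: $[A,S]=0$ means $D_\pg$, hence $D=\left[\begin{smallmatrix}0&0\\0&D_\pg\end{smallmatrix}\right]$, is a \emph{normal} operator, and since $\Der(\ggo)$ is the Lie algebra of the real algebraic group $\Aut(\ggo)$ it contains the transpose $D^t$ of a normal derivation $D$; therefore $\unm(D+D^t)=\left[\begin{smallmatrix}0&0\\0&S\end{smallmatrix}\right]$ is itself a derivation, and $Q_\gamma=cI+S$ with $S\in\qg_\gamma$ exhibits $(G/K,\gamma)$ as an algebraic soliton. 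You need this normality-plus-algebraicity argument (or an equivalent substitute) to finish; as written, your proof establishes only $[A,P]=0$ and then asserts the conclusion.
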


\begin{remark}
In particular, any semi-algebraic soliton which is equivalent (via a diffeomorphism) to an algebraic soliton must be an algebraic soliton itself.
\end{remark}

\begin{proof}
We have that $Q_\gamma=cI+S$, where $D_\pg=A+S$, $A\in\glg(\pg)_\gamma$, $A^t=-A$ (since there is a metric attached to $\gamma$), and $S\in\qg_\gamma$, $S^t=S$ (since $Q_\gamma^t=Q_\gamma$).  According to Proposition \ref{Kcomp}, we can assume that $D$ satisfies \eqref{D00}.  If $(G/K,\gamma)$ is flow diagonal, then it follows from \eqref{sas-Qt} that 
$$
[e^{sD_\pg}Q_{\gamma}e^{-sD_{\pg}}, Q_{\gamma}] = 0,  \qquad \forall s\in(-\epsilon,\epsilon).
$$
Hence $[[D_\pg,Q_{\gamma}],Q_{\gamma}] = 0$ and 
$$
0=\tr{D_\pg[[D_\pg,Q_{\gamma}],Q_{\gamma}]}= -\tr{[D_\pg,Q_{\gamma}]^2} = -\tr{[A,S]^2},
$$
from which follows that $[A,S]=0$ as it is symmetric, and thus $D_\pg$ is normal.  This implies that $D$ is normal as well relative to any extension of the inner product to $\ggo$ and so $D^t\in \Der(\ggo)$. Thus $Q_\gamma = cI + \unm(D_\pg+D_\pg^t)$, with $\unm(D+D^t) \in \Der(\ggo)$,  showing that $(G/K,\gamma)$ is an algebraic soliton.
\end{proof}

\end{document}